\newtheorem{theorem}{Theorem}[section]
\newtheorem{proposition}[theorem]{Proposition}
\newtheorem{definition}[theorem]{Definition}
\newtheorem{remark}[theorem]{Remark}
\newtheorem{assumption}[theorem]{Assumption}
\newcommand{\Rset}{\mathbb R}
\newcommand{\R}{\mathbb R}
\newcommand{\Nset}{\mathbb N}
\newcommand{\al}{{\alpha}}
\newcommand{\Om}{{\Omega}}
\newcommand{\va}{{\varphi}}
\newcommand{\vr}{{\varrho}}
\newcommand{\la}{\lambda}
\newcommand{\ga}{{\gamma}}
\newcommand{\cCL}{{\cal C\hskip-0.5mm L}}
\newcommand{\fA}{\mathfrak{A}}
\newcommand{\hf}{{\frac12}}
\newcommand{\wrt}{{with respect to }}
\newenvironment{declaration}[1]{\trivlist
\item[\hskip \labelsep{\bf #1 }]\ignorespaces}{\endtrivlist}
\newenvironment{proofof}[1]{\begin{declaration}{#1}}{\hfill
$\square$ \end{declaration}}
\newenvironment{proof}{\begin{proofof}{Proof.}}{\end{proofof}}
\begin{document}

\title{Finite-dimensional global attractors  for parabolic
nonlinear equations with state-dependent delay}
\author{Igor Chueshov$^{a,}$\footnote{\footnotesize 
E-mails:  chueshov@karazin.ua (I.Chueshov),
rezounenko@yahoo.com (A.Rezounenko)}
 ~~ and ~~ Alexander  Rezounenko$^{a,b}$ \\ \\
 $^a$Department of Mechanics and Mathematics, \\
 Karazin Kharkov National University,  Kharkov, 61022,  Ukraine\\
$^b$Institute of Information Theory and Automation, Academy of
Sciences \\ of the Czech Republic, P.O. Box 18, 182\,08 Praha, CR }
  \maketitle
\begin{abstract}
We deal with a class of parabolic  nonlinear evolution
equations with state-dependent delay.
This class covers several important PDE models arising in biology.
We first prove well-posedness in a
certain space of functions which are Lipschitz  in time. We  show that the model
considered
 generates an evolution operator semigroup $S_t$ on a  space $\cCL$ of Lipschitz type
 functions over delay time interval.
The operators $S_t$ are closed  for all $t\ge 0$ and continuous for $t$ large enough.
Our main result shows that the semigroup $S_t$  possesses
 compact global and exponential attractors of finite fractal dimension.
 Our argument is based on the recently developed method of quasi-stability
 estimates
and involves  some extension of the theory of global attractors for the case of closed
evolutions.
\par\noindent
{\bf Keywords: } parabolic evolution equations,   state-dependent delay,
 global attractor, finite-dimension, exponential attractor.
\par\noindent
{\bf 2010 MSC:} 35R10, 
35B41, 
93C23. 
\end{abstract}

\section{Introduction}


Differential equations with different types of delay attract much
attention during last decades. Including delay terms
in differential equations is a natural step  of taking into account that
the majority of real-world problems depends on the pre-history of the
evolution. Delay terms in an equation reflect a well-understood
phenomenon that evolution  of a state of a system depends not
only on this state but rather on the states during some
previous interval of time (memory of the system).
This leads to infinite-dimensional dynamics even in the case
 of ordinary differential equations.
 The general theory
of delay differential equations was initially developed for the
simplest case of constant delays. We cite just classical monographs
\cite{Bellman-Cooke_AP-1963_book,Walther_book,Hale} on ordinary
differential equations (ODEs) and milestone articles
\cite{Fitzgibbon-JDE-1978,Travis-Webb_TAMS-1974} on partial
differential equations (PDEs) with constant delays. On the other
hand it is clear that the constancy of the delay is just an extra
assumption made to simplify the study, but it is not really well-motivated by
real-world models. To describe a process more naturally a  new class
of {\it state-dependent delay} models was introduced and intensively
studied during last decades. We mention works on ODEs
\cite{Driver-AP-1963,Hartung-Krisztin-Walther-Wu-2006,Krisztin-Arino_JDDE-2001,Walther_JDE-2003}
and on PDEs
\cite{ChuRez2013-sdd,Rezounenko_JMAA-2007,Rezounenko_NA-2009,Rezounenko_JMAA-2012,Rezounenko-Zagalak-DCDS-2013} with state-dependent delays.
\par
 The simplest case of a state-dependent delay is a delay explicitly given by a real-valued function $\eta : \Rset \to \Rset_{+}$ which depends  on the value $x(t)$
  at the reference time $t$
  but not on previous  values of the
solution $\{ x(\tau), \tau\le t\}$. This leads to terms of the form
$f(x(t-\eta(x(t)))$ in the model considered. Even in this  case the
non-uniqueness could appear (see the scalar ODE example constructed
by R.Driver \cite{Driver-AP-1963} in 1963 for initial data from the
space of continuous functions on the delay interval). The standard
way for general models to avoid non-uniqueness in the case of
infinite-dimensional dynamics is to consider smoother (narrower) classes of
solutions. However in this case the existence problem may become
critical. The main task is to find a good balance between these two
issues.
\par
In this paper we  deal with      a certain abstract parabolic problem with the
state dependent delay term of a rather general structure. Our considerations are motivated by
several biological models,
see the discussion and the references in \cite{Britton-1990},
\cite{Gourley-So-Wu-2003}
and \cite{Rezounenko-Zagalak-DCDS-2013}.
 Our main goal in this paper is to find appropriate phase
spaces in which we can establish the well-posedness of our model and
study its long time (qualitative) dynamics.
\par
Our first result (Theorem \ref{th:wp})  states  well-posedness of
the problem and allows us to define an evolution semigroup $S_t$ of
closed mappings on a certain Banach space of functions on the delay
time  interval with values in an appropriate Hilbert space. In some
sense this result extends the well-posedness statements in
\cite{Rezounenko_NA-2009,Rezounenko_JMAA-2012,Rezounenko-Zagalak-DCDS-2013}
to more general delay terms.
 The main result of the paper  (Theorem \ref{th:attr}) states the existence of a global
 \emph{ finite-dimensional} attractor, the object which is responsible for long-time dynamics.
 We also show that the model possesses an exponential fractal global attractor
 (see the definition in the Appendix).
 \par
 Although
for some parabolic problems
 with state-dependent delay the existence of compact global attractors was established earlier in
\cite{Rezounenko_NA-2009,Rezounenko-Zagalak-DCDS-2013}, to the best
of our knowledge,  results on \textit{finite-dimensional} behavior
for parabolic  state-dependent delay problems were not known before.
The main difficulty is related to the fact the corresponding delay
term is not Lipschitz on the natural energy balance space. We also
mention that our Theorem \ref{th:attr} can be applied in the
situation considered in \cite{Rezounenko-Zagalak-DCDS-2013}  and
gives the finite-dimensionality of the global attractor constructed
in that paper.

\par
We note that the evolution operators $S_t$  we construct are not
continuous mapping on the phase space for $t$ small enough.
Therefore to prove the existence of a compact global attractor  we
use the extension of the standard  theory suggested in
\cite{PataZelik-cls}. As for dimension issues we
  apply the idea of the method
of quasi-stability estimates developed earlier in \cite{chlJDE04,Chueshov-Lasiecka-MemAMS-2008_book,Chueshov-Lasiecka-2010_book,CL-hcdte-notes}
 for the second order in time evolution models which generate continuous evolution semigroups. This is possible in our case due to the continuity of evolution operator for large times. We note that
in the delay case the quasi-stability method was applied earlier in \cite{CLW-delay,CLW-delay-surv,ChuRez2013-sdd}
for second  order models, see also  \cite[Chapter 6]{Chueshov-2014_book}.

\section{Model description}
We deal with
 well-posedness and  long-time  dynamics of
abstract evolution equations with delay  of the form
\begin{equation}\label{main-eq}
\dot u(t) +A u(t) + F(u_t)+G(u(t)) = h, \quad t>0,
\end{equation}
in some Hilbert space $H$. Here the dot over an element means time
derivative, $A$ is  linear and $F$, $G$ are nonlinear operators, $h\in H$.
The term
$F(u_t)$ represents (nonlinear) delay effect in the dynamics.
As usually for delay
equations, the history segment (the state) is denoted by  $u_t\equiv
u_t(\theta)\equiv u(t+\theta)$ for $\theta \in [-r,0].$
\begin{assumption}[Basic Hypotheses]\label{as:basic}
{\rm In our study we assume that:
\begin{itemize}
  \item [{\bf (A)}]    $A$  is a positive operator
with a discrete spectrum in a separable Hilbert space $H$ with
a dense domain $D(A)\subset H$.  Hence there exists an orthonormal basis $\{ e_k \}$
of $ H$ such that $$ A e_k = \la_k e_k,\quad\mbox{with } 0< \la_1\le
 \la_2\le \dots ,\quad \ \lim_{k \to \infty}  \la_k=\infty.
 $$
We  define the spaces $H_\al$  which are $D(A^{\alpha})$ for $\al\ge0$
 (the domain of $A^{\alpha}$) and the completions of $H$ \wrt the norm $\|A^\al\cdot\|$ when $\al<0$
 (see, e.g., \cite{Lions-Magenes-book}).
 Here and below, $\parallel \cdot\parallel$ is the norm of $H$, and
$\langle \cdot, \cdot \rangle $ is the corresponding scalar  product.
   For
$r>0,$ we denote for short $C_{\alpha}=C([-r,0]; H_\al)$ which
is a Banach space with the  norm
$$
\vert v
\vert_{C_{\alpha}} \equiv \sup \{\parallel   v(\theta
)\parallel_\al : \theta \in [-r,0] \},
$$
where $\|v\|_\al=\|A^{\al}v\|$ is the norm in $H_\al$ for $\al\in\R$.
We  also write $C=C_0$.
  \item [{\bf (F)}] The delay term $F(u_t)$ has
  the form $F(u_t)\equiv F_0(u(t-\eta(u_t)))$,
  where
\textbf{(a)} $F_0 : H_\al\mapsto H_\al$
  is globally Lipschitz for $\al=0$ and   $\al =-1/2$, i.e.,
there exists $L_F>0$ such that
\begin{equation}\label{F-lip}
    ||F_0(v)- F_0(u)||_\al \le L_{F}
|| v-u||_\al,~~~ v,u\in H_\al,~~\al=0,-1/2;
\end{equation}
and \textbf{(b)} $\eta :C\equiv C([-r,0]; H)\mapsto [0,r]\subset \R$ is globally Lipschitz:
\begin{equation}\label{eta-lip}
    |\eta(\phi)- \eta (\psi)| \le L_{\eta}
|\phi-\psi|_{C},~~~ \phi,\psi\in C([-r,0]; H).
\end{equation}
 \item [{\bf (G)}]
$G : H_{1/2}\mapsto H$  is
locally Lipschitz, i.e.
\begin{equation}\label{G-lip}
    ||G(v)- G(u)|| \le L_G(R) || v-u||_{1/2},
    ~~~ v,u\in H_{1/2},~~ \|v\|_{1/2}, \|u\|_{1/2}\le R,
\end{equation}
where $L_G: \R_{+} \to  \R_{+}$ is a nondecreasing function.
 We also assume that $G$ is a
  potential mapping, the latter means that
 there exists a
(Frech\'{e}t differentiable) functional $\Pi(u) : H_{1/2}\to \R$
such that $G(u)=\Pi^\prime (u)$ in the sense
$$ \lim_{||v||_{1/2} \to 0} ||v||^{-1}_{1/2} \big[
\Pi (u+v) - \Pi(u) + \langle (u),v\rangle \big] =0.
$$
Moreover, we assume that \textbf{(a)}  there exist positive constants
 $c_1$ and $c_2$ such that
\begin{equation}\label{sdd17-G}
\langle G(u),Au\rangle\ge  -c_1||A^{\frac{1}{2}}u||^2-c_2, \quad u\in
D(A);
\end{equation}
and \textbf{(b)} there exist $\delta>0$ and $m\ge 0$ such that $G :
H_{1/2-\delta}\mapsto H_{-m}$ is continuous.
\end{itemize}
}
\end{assumption}

Our  main motivating  example of a system with  discrete state-dependent delay
is the following one:
\begin{equation}\label{sdd9-1}
\frac{\partial }{\partial t}
u(t,x)-\Delta u(t,x) +  b\left(
[Bu(t-\eta (u_t),\cdot)](x)\right)+g(u(t,x))=h(x),\quad x\in \Omega,~~t>0,
\end{equation}
in a bounded domain $\Om \subset \R^n$, where $B: L^2(\Omega) \to
L^2(\Omega)$ is a  bounded operator and $b:\mathbb{R}\to \mathbb{R}$
stands for a Lipschitz map. The function $\eta : C([-r,0];
L^2(\Omega)) \to [0,r]\subset \mathbb{R}_{+}$ denotes \textit{a
state-dependent discrete delay}. The Nemytskii operator $u\mapsto
g(u)$ with $C^1$ function $g$ represents a nonlinear non-delayed reaction
term and $h(x)$ describes sources.
 The form of the delay term  is motivated by models in population dynamics
 where function $b$ is a birth function (could be
 $b(s)=c_1 s \cdot e^{-c_2 s}$, with $c_1,c_2>0$) and the delay $\eta$
 represents the maturity age. For more detailed discussion and further
 examples (the diffusive Nicholson blowflies equation,
 Mackey-Glass equation - the diffusive model of Hematopoiesis  - blood cell
 production, the Lasota-Wazewska-Czyzewska model in
hematology) with state-dependent delay
 we refer to  \cite{Gourley-So-Wu-2003}
and \cite{Rezounenko-Zagalak-DCDS-2013}  and to the references
therein. We
note that several special cases of
the model in \eqref{sdd9-1} were studied in
\cite{Rezounenko_NA-2009,Rezounenko_NA-2010,Rezounenko_JMAA-2012,Rezounenko-Zagalak-DCDS-2013}).
For instance it was assumed in
 \cite{Rezounenko-Zagalak-DCDS-2013}
that $g(s)\equiv 0$, $b(s)$ is a bounded function, and  $B$ is an integral compact linear operator. This
leads to nonlocal  (in space) models. Our assumptions covers the
non-compact case. We can take $b(s)=s$ and $B=Id$, for instance.
We also note that if
we  equip \eqref{sdd9-1} with the Dirichlet boundary condition, then
the dissipativity property in \eqref{sdd17-G} holds
  provided $g\in C^1(\R)$, $g(0)=0$ and the derivative $g'(s)$ is bounded from below. This
follows by the standard integration by parts. Thus population dynamics models
with nonlinear sink/source feedback terms can be included in consideration.  For this kind of biological models, but
with state-{\it independent} delay, we refer to \cite{Wu-book}.
\smallskip\par

We equip the equation \eqref{main-eq} with the initial condition
\begin{equation}\label{sdd9-ic}
   u(\theta)=\varphi(\theta),~~\theta\in[-r,0],
   \end{equation}
and for initial data $\va$ consider the space
%
%
\begin{equation}\label{sdd9-8}
\cCL \equiv \left\{ \varphi \in C([-r,0]; H) \, \left|\; {\rm
Lip}_{[-r,0]}(A^{-\hf}\va) <\!+\infty;\,\right. \varphi(0)\in
D(A^{\frac{1}{2}})\right\},
\end{equation}
where
\begin{equation*}
{\rm Lip}_{[a,b]}(\varphi)\equiv  \sup\limits_{s\neq t} \left\{
\frac{||\varphi (s)-\varphi (t) ||}{|s-t|} \,:\; s,t\in [a,b],~ s\neq t
\right\}
\end{equation*}
denotes the corresponding Lipschitz constant. One can show that the
space $\cCL$ consists of continuous functions $\varphi$ on $[-r,0]$
with values in $H$  such that $\varphi(0)\in H_{1/2}$ and which are
absolutely continuous in $H_{-1/2}$. The latter means that there
exists the derivative $\dot\varphi\in L^\infty(-r,0; H_{-1/2})$ such
that
\[
\va(s)=\va(0)-\int_s^0\dot\va(\xi) d\xi,  ~~s\in [-r,0],
\]
and
\[
{\rm Lip}_{[-r,0]}(A^{-\hf}\varphi) = {\rm ess\, sup}\left\{
 ||A^{-\frac{1}{2}}\dot \varphi (s)||\, :\; s\in [-r,0]\right\}\equiv
 |\dot\phi|_{L^\infty(-r,0; H_{-1/2})}.
\]
We equip the space $\cCL$ with the natural norm
\begin{equation}\label{sdd9-9}
|\varphi|_{\cCL}\equiv \max_{s\in [-r,0]} ||\varphi(s)|| +{\rm
Lip}_{[-r,0]}(A^{-\hf}\varphi) + ||A^{\frac{1}{2}} \varphi(0)||.
\end{equation}
We note that the delay term $F(\va)\equiv F_0(\va(-\eta(\va)))$ in
\eqref{main-eq} is well-defined for every $\varphi\in C$ and
possesses the property
(see \eqref{F-lip} for $\alpha =0$) $||F(\varphi) -F(0) || \le
L_F ||\varphi (-\eta(\varphi)) || \le L_F |\varphi|_C$, hence
\begin{equation}\label{lin-b-F}
||F(\varphi)|| \le c_1+ c_2 |\varphi|_C,~~~\va\in C,
\end{equation}
with $c_1=||F(0)||$ and $c_2=L_F$.
 However it is not Lipschitz on the space $C$. One can only
show that the delay term $F$ satisfies the inequality
\begin{equation}\label{sdd17-F1}
||F(\varphi)- F(\psi)||_{-1/2} \le  L_{F}\left(1+ {\rm Lip}_{[-r,0]}(A^{-\hf}\varphi)
 \right) |\varphi-\psi|_C
\end{equation}
for every $\va\in \cCL$ and $\psi\in C$. Using the terminology of
\cite{Mallet-Paret} we can call this mapping $F$  ``almost
Lipschitz'' from $C$ into $H_{-1/2}$, see also discussion in
\cite{Hartung-Krisztin-Walther-Wu-2006}.

\begin{remark}
 {\rm
  We can also include in \eqref{main-eq} a delay term
  $M(u_t)$ which is defined by a globally Lipschitz function
  from $C(-r,0; H_{1/2})$ into $H$. We will not pursue this generalization
  because our main goal is state-dependent delay models.
  }
\end{remark}

\section{Well-posedness}\label{sec:wp}

In this section we prove the existence and uniqueness theorem and study
properties of solutions. Then we use these results to construct the
corresponding evolution semigroup and describe its dynamical properties.
\par
We introduce the following definition.

\begin{definition}[Strong solution]
\label{D1}
{\rm
A vector-function
\begin{equation}\label{def-smooth}
u(t)\in C([-r,T];H)\cap
C([0,T];H_{1/2}) \cap L^2(0,T; H_1)
\end{equation}
is said to be a (strong)
 solution to the problem defined by (\ref{main-eq}) and
(\ref{sdd9-ic}) on $[0,T]$ if
\begin{itemize}
    \item[\textbf{(a)}] $u(\theta)=\varphi (\theta)$ for $\theta\in [-r,0]$;
    \item[\textbf{(b)}] $\forall v\in L^2(0,T; H) $ such that $\dot v\in
L^2(0,T; H_{-1}) $ and $v(T)=0$ we have that
\begin{multline}\label{sdd9-7}
-\int^T_0 \langle u(t),\dot v(t)\rangle \, dt  +
\int^T_0 \langle A u(t),  v(t)\rangle \, dt \\
+ \int^T_0 \langle F(u_t)+G(u(t)), v(t)\rangle \, dt
= \langle\varphi (0),v(0)\rangle + \int^T_0 \langle h, v(t)\rangle \, dt.
\end{multline}
\end{itemize}
}
\end{definition}
\begin{remark}\label{re:def-sol}
{ \rm Let $u(t)$ be a strong solution on an interval $[0,T]$ with
some $\va\in C$.
Then it follows from \eqref{def-smooth} and also from \eqref{G-lip} and
\eqref{lin-b-F} that
\[
F(u_t)+G(u(t))-h\in  L^\infty(0,T; H).
\]
This allows us to conclude from \eqref{def-smooth} and \eqref{sdd9-7}
that
\begin{equation}
\label{dot-sol1} \dot u(t)\in L^\infty(0,T; H_{-1/2})\cap L^2(0,T;
H).
\end{equation}
Moreover, the relation in \eqref{sdd9-7} implies that $u(t)$ satisfies
\eqref{main-eq} for almost all $t\in [0,T]$ as an equality in $H$.
We also note that relations \eqref{def-smooth} and \eqref{dot-sol1} yield
\begin{equation}
\label{dot-sol2}
u_t\in\cCL~~\mbox{for every}~~t\in [0,T]~~\mbox{and}~~
\max_{[0,T]} |u_t|_{\cCL}<+\infty
\end{equation}
for every strong solution $u$ with initial data $\va$ from the space $\cCL$
defined in \eqref{sdd9-8}.
}
\end{remark}

Our first result is  the following theorem
on the existence and uniqueness of solutions.
\smallskip

\begin{theorem}\label{th:wp}
 Let Assumption~\ref{as:basic} be in force.  Assume that $\varphi \in \cCL$,
see \eqref{sdd9-8}.
Then the initial-value problem defined by (\ref{main-eq}) and
(\ref{sdd9-ic}) has a unique
strong solution on any time interval $[0,T]$.
This solution possesses the property
\begin{equation}\label{deriv-est}
    \dot u(t)\in C(0,T; H_{-1/2})\cap L^2(0,T; H)
\end{equation}
and satisfies the estimate
\begin{equation}\label{b-sdd9-14fin}
||A^{-1/2}\dot u(t)||^2+||A^{1/2}u(t)||^2 + \int^t_0\left[  ||\dot
u(\tau)||^2+ ||Au(\tau)||^2\right] \, d\tau \le C_T(R)
\end{equation}
for all $t\in [0,T]$ and  $||A^{1/2}\varphi (0)||^2 +
  |\va |_C^2\le R^2$.
Moreover, for every two strong solutions $u^1$ and $u^2$ with initial data
$\va^1$ and $\va^2$ from $\cCL$ we have that
  \begin{equation}\label{lip-weak}
  \sup_{\tau\in [0,t]}  ||u^1(t)-u^2(t)||^2 +
  \int_0^t||A^{1/2}(u^1(\tau)-u^2(\tau))||^2\le  C_R(T)|\va^1-\va^2 |_C^2,
~~\forall\, t\in [0,T],
  \end{equation}
  for all $\va^i$ such that $|\va^i|_\cCL\le R$.
\end{theorem}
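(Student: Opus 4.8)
The plan is to prove well-posedness by a fixed-point/continuation argument combined with energy estimates, exploiting the key structural feature that the delay term $F$ is globally Lipschitz on $C=C_0$ (estimate \eqref{lin-b-F}), so the \emph{state-dependence} of the delay does not enter the basic existence argument --- it only affects the regularity we must carry. First I would fix $\va\in\cCL$ and, on a short interval $[0,\tau]$, set up a contraction mapping on the space $C([-r,\tau];H)\cap C([0,\tau];H_{1/2})$: given a candidate history, one freezes the nonlinear term $F(u_t)+G(u(t))$ as a source and solves the linear parabolic equation $\dot u+Au=g$ with $u(0)=\va(0)\in H_{1/2}$. Standard analytic-semigroup theory for the positive operator $A$ with discrete spectrum (hypothesis \textbf{(A)}) yields the required regularity $u\in C([0,\tau];H_{1/2})\cap L^2(0,\tau;H_1)$ and $\dot u\in L^2(0,\tau;H)$, and gives a solution map whose contraction factor is controlled by the Lipschitz constants $L_F$ (via \eqref{lin-b-F}) and $L_G(R)$ together with a factor shrinking in $\tau$. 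The local Lipschitz nature of $G$ (hypothesis \textbf{(G)}) forces us to work on a ball of radius $R$ in $H_{1/2}$, which is where the energy estimates must re-enter to prevent blow-up.

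Next I would derive the a priori energy estimate \eqref{b-sdd9-14fin} that lets me continue the local solution to any $[0,T]$. The natural multiplier is $\dot u+$ a suitable combination; testing \eqref{main-eq} with $Au(t)$ and integrating, the dissipative structure \eqref{sdd17-G} handles the $G$-term by absorbing $\langle G(u),Au\rangle\ge -c_1\|A^{1/2}u\|^2-c_2$, while the delay term is bounded in $H$ by the linear-growth bound \eqref{lin-b-F}. The potential/gradient structure of $G$ (that $G=\Pi'$) is what makes the energy functional $E(t)=\tfrac12\|A^{1/2}u\|^2+\Pi(u)$ decrease appropriately, giving uniform control of $\|A^{1/2}u(t)\|$ and of $\int_0^t(\|\dot u\|^2+\|Au\|^2)\,d\tau$ on the whole interval, after which $\|A^{-1/2}\dot u(t)\|$ follows from \eqref{main-eq} read in $H_{-1/2}$. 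Since these bounds depend only on $R$ and $T$ but not on the existence interval, the local solution extends globally and \eqref{deriv-est} follows from the continuity of the semigroup-variation-of-constants representation.

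Finally I would establish the Lipschitz-in-initial-data estimate \eqref{lip-weak}, which is the subtlest point and I expect it to be the main obstacle. The difficulty is precisely the one flagged in the text: the delay term is \emph{not} Lipschitz on $C$, only ``almost Lipschitz'' into $H_{-1/2}$ via \eqref{sdd17-F1}. For the difference $w=u^1-u^2$ of two solutions, testing the difference equation with $w$ itself (an $H$-energy estimate rather than an $H_{1/2}$-one) produces on the right-hand side the term $\langle F(u^1_t)-F(u^2_t),w\rangle$, which I would bound using \eqref{sdd17-F1} as $\|F(u^1_t)-F(u^2_t)\|_{-1/2}\|w\|_{1/2}\le L_F(1+{\rm Lip}_{[-r,0]}(A^{-\hf}u^1_t))\,|u^1_t-u^2_t|_C\,\|A^{1/2}w\|$; the crucial gain is that the $\|A^{1/2}w\|$ factor can be absorbed by the parabolic smoothing term $\int_0^t\|A^{1/2}w\|^2$ coming from $\langle Aw,w\rangle$, at the cost of a constant involving ${\rm Lip}_{[-r,0]}(A^{-\hf}u^1_t)$ --- which is uniformly bounded on $[0,T]$ by \eqref{dot-sol2} once $|\va^1|_{\cCL}\le R$. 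The $G$-difference is controlled by its local Lipschitz bound \eqref{G-lip} with the same absorption trick, and then a Gr\"onwall argument in the quantity $\sup_{[0,t]}\|w\|^2+\int_0^t\|A^{1/2}w\|^2$ closes the estimate with a constant $C_R(T)$. It is worth noting that this argument also yields \emph{uniqueness}, since taking $\va^1=\va^2$ in \eqref{lip-weak} forces $u^1\equiv u^2$; so the uniqueness part of the theorem is a byproduct of the stability estimate rather than a separate step.
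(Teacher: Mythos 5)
Your energy estimate and your stability step are sound and essentially coincide with the paper's argument: the a priori bound \eqref{b-sdd9-14fin} comes from the multiplier $Au$ together with \eqref{sdd17-G} and the growth bound \eqref{lin-b-F}, and the estimate \eqref{lip-weak} comes from testing the difference equation with $z=u^1-u^2$, using the ``almost Lipschitz'' bound \eqref{sdd17-F1} together with \eqref{dot-sol2} and absorbing $\|A^{1/2}z\|$ into the dissipation; uniqueness is indeed a byproduct. The genuine gap is in your existence step. You assert that $F$ is globally Lipschitz on $C$ and cite \eqref{lin-b-F}; but \eqref{lin-b-F} is only a linear growth bound, and the paper stresses immediately after it that $F$ is \emph{not} Lipschitz on $C$. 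For two candidate histories one has $\|F(u_t)-F(v_t)\|\le L_F\|u(t-\eta(u_t))-v(t-\eta(v_t))\|$, and splitting this produces the term $\|u(t-\eta(u_t))-u(t-\eta(v_t))\|$, which is controlled by $|u_t-v_t|_C$ only through a modulus of continuity of $u$ in time; for merely continuous iterates this is not Lipschitz (this is exactly the mechanism behind Driver's nonuniqueness example). Consequently the map you iterate is not a contraction on $C([-r,\tau];H)\cap C([0,\tau];H_{1/2})$ in the norm you propose, and the ``contraction factor controlled by $L_F$'' does not exist. The only available substitute is \eqref{sdd17-F1}, which (a) requires one of the two arguments to have a finite Lipschitz constant in time with values in $H_{-1/2}$, so you would have to restrict the fixed-point set to histories with a uniform Lipschitz-in-time bound and verify that the solution map preserves it, and (b) lands only in $H_{-1/2}$, so you lose half a derivative and must recover it from parabolic smoothing before closing the estimate in $C([0,\tau];H_{1/2})$, where $G$ must be evaluated. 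None of this is automatic; it is precisely the difficulty the theorem is designed to overcome, and as written your existence step fails.

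The paper avoids this by a Galerkin scheme: local existence for the finite-dimensional approximations is quoted from the ODE theory of state-dependent delay equations, which applies because the projected initial history is Lipschitz in time precisely when $\va\in\cCL$ (this is where the choice of phase space enters), and the passage to the limit in the delay term again uses \eqref{sdd17-F1} together with the uniform bound on ${\rm Lip}_{[-r,0]}(A^{-\hf}u^m_t)$ supplied by the a priori estimates. If you wish to keep a fixed-point argument, you must build the Lipschitz-in-time constraint into the underlying set and run the contraction in the weaker $C$-norm, tracking the half-derivative loss; otherwise the existence part of your proof does not go through.
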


\medskip
\begin{proof}
 To prove the existence we use
 the  standard compactness  method  \cite{Lions} based on
 Galerkin approximations   \wrt the eigen-basis $\{e_k\}$ of the operator $A$
 (see Assumption 2.1 (A)).
\par
We define a  Galerkin approximate solution of order $m$ by the formula
\[
u^m=u^m(t,x)=\sum^m_{k=1} g_{k,m}(t) e_k,
\]
where the functions $g_{k,m}$ are defined on $[-r,T]$, absolutely continuous
on $[0,T]$  and  such that
 the following  equations are satisfied
 \begin{equation}\label{sdd9-12}
\left\{ \begin{array}{ll} &\langle \dot u^m+Au^m+F(u^m_{t})+G(u^m)-h,
e_k\rangle =0,~~t>0,\\ &
\langle u^m(\theta),e_k\rangle=\langle \varphi (\theta) , e_k\rangle,
\,\,\forall \theta\in [-r,0],~~\forall k=1,\ldots,m.\end{array}\right.
 \end{equation}
The equation in (\ref{sdd9-12}) is a system of (ordinary) differential
equations in $\mathbb{R}^m$ with a concentrated (discrete)
state-dependent delay for the unknown vector function $U(t)\equiv
(g_{1,m}(t), \ldots,g_{m,m}(t))$ (for the corresponding theory see
\cite{Walther_JDE-2003} and also the survey
\cite{Hartung-Krisztin-Walther-Wu-2006}).
\par
The condition $\varphi \in \cCL$ implies that the function
$U(\cdot)|_{[-r,0]}\equiv P_m \varphi (\cdot) $, which defines
initial data, is Lipschitz continuous as a function from $[-r,0]$ to
$\mathbb{R}^m$. Here $P_m$ is the orthogonal projection onto the
subspace ${\rm Span}\, \{ e_1,\ldots, e_m\}$.  Hence,
we can apply the theory of ODEs with discrete state-dependent delay
(see e.g.
\cite{Hartung-Krisztin-Walther-Wu-2006}) to get the \emph{local} existence
of solutions to (\ref{sdd9-12}).
\par
Next, we derive  an a priori  estimate which allows us to extend
solutions $u^m$ to (\ref{sdd9-12}) on an arbitrary  time interval
$[0,T]$.  We also use  it for the  compactness of the set of
approximate solutions.
\par
 We multiply
the first equation in (\ref{sdd9-12})   by $\lambda_k g_{k,m}$ and
sum for $k=1,\ldots,m$ to get
\begin{align*}
\frac{1}{2} \frac{d}{dt} ||A^{1/2}u^m(t)||^2 + ||Au^m(t)||^2 +
 \langle F(u^m_t)+G(u^m(t))-h ,Au^m(t)\rangle =0.
\end{align*}
Due to \eqref{lin-b-F} and \eqref{sdd17-G}
this implies that
\begin{align*}
\frac{d}{dt}\left[ ||A^{1/2}u^m(t)||^2 + \int_0^t ||Au^m(\tau)||^2 d\tau \right] &\le c \big[1+
  |u^m_t |_C^2 +||A^{1/2}u^m(t)||^2\big] \\
  & \le c_0  \big[1+
  |\va |_C^2\big] + c_1\max_{\tau\in [0,t]}||A^{1/2}u^m(\tau)||^2
\end{align*}
 Integrating the last inequality  we can easily  see that
the function
\[
\Psi(t)= \max_{\tau\in [0,t]} ||A^{1/2}u^m(\tau)||^2 + \int_0^t ||Au^m(\tau)||^2 d\tau
\]
satisfies the inequality
\begin{align*}
\Psi(t) &\le 2 ||A^{1/2}\va(0)||^2 +
  2t c_0  \big[1+
  |\va |_C^2\big] + 2c_1\int_0^t \Psi(\tau)  d\tau.
\end{align*}
Therefore Gronwall's lemma gives us the a priori estimate
\begin{equation}\label{sdd9-14}
||A^{1/2}u^m(t)||^2 + \int^t_0 ||Au^m(\tau)||^2 \, d\tau \le 2e^{at}\left[
||A^{1/2}\varphi (0)||^2 +b t  \big[1+
  |\va |_C^2\big] \right],
\end{equation}
for all $t$ from an existence interval,
 where $a$ and $b$   are positive constants. This a priori estimate allows us
 to extend  approximate solutions on every time interval $[0,T]$
 such that \eqref{sdd9-14} remains true for every $t\in [0,T]$.
\par
Now we establish  additional a priori bounds. Using \eqref{sdd9-14},
\eqref{G-lip} and \eqref{lin-b-F} from the first equation in (\ref{sdd9-12}) we obtain
that
\[
\| \dot u^m(t)+Au^m(t)\|\le \| F(u^m_{t})\|+\|G(u^m(t))\|+\|h\|\le C(R,T),~~t\in [0,T],
\]
provided $ ||A^{1/2}\varphi (0)||^2 + |\va |_C^2\le R^2$.
%
Thus by \eqref{sdd9-14} we obtain the estimate
\begin{equation}\label{sdd9-14fin}
||A^{1/2}u^m(t)||^2 + \int^t_0\left[  ||\dot u^m(t)||^2+ ||Au^m(\tau)||^2\right] \, d\tau \le C_T(R)
\end{equation}
for all $t\in [0,T]$ and  $||A^{1/2}\varphi (0)||^2 +
  |\va |_C^2\le R^2$. It  also follows from  (\ref{sdd9-12})  that
\begin{equation}\label{sdd9-14fin2}
\sup_{t\in [0,T]}||A^{-1/2}\dot u^m(t)||^2 \le C_T(R).
\end{equation}
Thus
$$
\{ u^m \}^\infty_{m=1} \hbox{ is a bounded set in } W_1\equiv
L^\infty(0,T;H_{1/2})\cap L^2(0,T;D(A)).
$$
and
$$
\{ \dot u^m \}^\infty_{m=1} \hbox{ is a bounded set in } W_2\equiv
L^\infty(0,T;H_{-1/2})\cap L^2(0,T; H).
$$
Hence, there exist a subsequence $\{ (u^k;\dot u^k ) \}$ and an
element  $(u;\dot u )\in Z_1\equiv W_1\times W_2$ such that
\begin{equation*}
\{ (u^k;\dot u^k ) \} \hbox{ *-weak converges to } (u;\dot u )
\hbox{ in } Z_1.
\end{equation*}
By the Aubin-Dubinski theorem \cite[Corollary 4]{Simon-AMPA-1987} we also have
\[
 u^m \to u  \hbox{ in } C(0,T;H_{1/2-\delta}))\cap L^2(0,T;H_{1-\delta}).
\]
Now the proof that any *-weak limit $u(t)$ is a solution is
standard. To make the limit transition in the nonlinear terms $F$
and $G$ we use relation \eqref{sdd17-F1} and Assumption
\ref{as:basic}\textbf{(Gb)}.

\par
 The property $u(t) \in C([0,T]; H_{1/2}))$ follows from the
 well-known continuous embedding (see also \cite[Theorem 1.3.1]{Lions-Magenes-book} 
or \cite[Proposition 1.2]{showalter}):
 \[
  \{ u\in L^2(0,T;H_1) : \dot u\in L^2(0,T; H)\} \subset C([0,T];H_{1/2}).
 \]
 The continuity of $\dot u$ in $H_{-1/2}$ follows from equation \eqref{main-eq}
 and from continuity of $u$ in $H_{1/2}$.
Thus the existence of strong solutions is proved. It is easy to see from \eqref{sdd9-14fin}
and \eqref{sdd9-14fin2}
that the strong solution constructed satisfies  \eqref{b-sdd9-14fin}.

\smallskip\par
Now we use this fact to prove the uniqueness.
\par
 Let $u^1$ and $u^2$ be two solutions
(at this point we do not assume that they have the same initial data).
Then the difference $z=u^1-u^2\in C([0,T];H_{1/2})\cap L^2(0,T;H_1)$ is a strong solution to
 the linear parabolic type (non-delay) equation
\begin{equation}\label{main-eq-dif}
\dot z(t) +A z(t) =f(t),~~t>0, ~~\mbox{with}~~ f(t)\equiv F(u^2_t)-F(u^1_t)+G(u^2(t))-G(u^1(t)).
\end{equation}
By Remark~\ref{re:def-sol}
$f\in L^\infty(0,T;H)$. From \eqref{G-lip} and \eqref{sdd17-F1}
using \eqref{dot-sol2} we also have that
\[
\|G(u^2(t))-G(u^1(t))\|\le L(\vr)\|z(t)\|_{1/2},~~ t\in [0,T],
\]
and
\[
\|A^{-1/2}(F(u^2_t)-F(u^1_t))\| \le
L_F(1+\vr)|z_t|_{C},~~ t\in [0,T],
\]
for every $\vr \ge \max_{[0,T]}\left\{ |u^1_t|_{\cCL}+
|u^2_t|_{\cCL}\right\}$. Therefore
\begin{align*}
|\langle f(t), z(t)\rangle| \le & L_F(1+\vr)|z_t|_{C}
\|z(t)\|_{1/2}+L(\vr)\|z(t)\|_{1/2} \|z(t)\| \\
\le &\hf \|z(t)\|^2_{1/2} + C(\vr)|z_t|_{C}.
\end{align*}
Thus using the standard
 multiplier $z$ in \eqref{main-eq-dif}
 we obtain that
\begin{align*}
 \frac{d}{dt} ||z(t)||^2 + ||A^{1/2}z||^2 &\le
 C(\vr)\| z_t\|_C^2 \le  C(\vr) \left[|\va^1-\va^2 |_C^2 + \sup_{\tau\in [0,t]} \| z(\tau)\|_C^2
\right]
\end{align*}
for every $\vr \ge \max_{[0,T]}\left\{ |u^1_t|_{\cCL}+
|u^2_t|_{\cCL}\right\}$. Applying Gronwall's lemma we obtain
  \begin{equation}\label{lip-weak2}
  \sup_{\tau\in [0,t]}  ||u^1(t)-u^2(t)||^2 +
  \int_0^t||A^{1/2}(u^1(\tau)-u^2(\tau))||^2\le  C(\vr)|\va^1-\va^2 |_C^2,
~~\forall\, t\in [0,T],
  \end{equation}
  This implies  uniqueness of strong solutions.
  \par
  As a by-product the uniqueness yields
  that {\it any} strong solution satisfies \eqref{b-sdd9-14fin}.
  Therefore we can apply \eqref{lip-weak2}
  with $\vr>R+C_T(R)$ to obtain \eqref{lip-weak}.
  \par
 Thus the proof of Theorem \ref{th:wp} is complete.
 \end{proof}
Theorem \ref{th:wp} allows us to define
an evolution semigroup $S_t$ on the space $\cCL$ (see (\ref{sdd9-8})) by the formula
\begin{equation}\label{sdd10-26}
S_t \varphi \equiv u_t,\quad t\ge 0,
\end{equation}
where $u(t)$ is the unique solution to the problem (\ref{main-eq})
and (\ref{sdd9-ic}). We note that \eqref{lip-weak} implies that
$S_t$ is \textit{almost } locally  Lipschiz on $C$, i.e.,
\[
|S_t\va^1-S_t\va^2|_{C}\le C_R(T)
|\va^1-\va^2|_{C}~~\mbox{for every}~~\va^i\in \cCL,~~|\va^i|_{\cCL}\le
R,~~ t\in [0,T]
\]
However, it seems that a similar bound is not true in the
space $\cCL$. We can only guarantee
 that $\va\mapsto S_t \varphi$ is a continuous mapping
 on $\cCL$ for all $t> r$.
 Moreover,
  the following assertion shows that the mapping
   $\va\mapsto S_t \varphi$  is even $\hf$-H\"{o}lder
 on $\cCL$ \wrt $\va$ when $t>r$.

  \begin{proposition}[Dependence on initial data in the space $\cCL$]\label{pr:ini-data}
   Assume that the hypotheses of Theorem \ref{th:wp} are in force.
    Let $u^1$ and $u^2$ be two solutions on $[0,T]$
    with initial data $\va^1$ and $\va^2$ from $\cCL$.
  Then the difference  $z=u^1-u^2$ satisfies the estimate
\begin{equation}\label{b-sdd9-14dif}
(t-r)\left[||A^{-1/2}\dot z(t)||^2+||A^{1/2}z(t)||^2\right] + \int^t_{r}(\tau-r)\left[  ||\dot z(\tau)||^2+ ||Az(\tau)||^2\right] \, d\tau   \le C_T(R) |\va^1-\va^2 |_C
\end{equation}
for all $t\in [r,T]$ and  for all initial data $\va^i$ such that
$|\va^i|_\cCL \le R$.
 This implies that for  every $t> r$ the evolution semigroup $S_t$ is $\hf$-H\"older continuous in the norm of $\cCL$.
 In the case when $t\in (0,r]$ we can  guarantee the closeness of the evolution operator
 $S_t$ only. This means\footnote{We refer to the Appendix
 for a discussion of closed evolutions. Here we only mention that
 any continuous mapping is closed
 and a mapping can be closed  but not continuous, see examples in \cite{PataZelik-cls}
 and also in \cite[Sect.1.1]{Chueshov-2014_book}.
}
  (see, e.g., \cite{PataZelik-cls}) that the properties $\va_n\to\va$ and
$S_t\va_n\to\psi$ in  the norm of $\cCL$ as $n\to\infty$ imply
that $S_t\va =\psi$.
  \end{proposition}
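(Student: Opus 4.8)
The plan is to use that the difference $z=u^1-u^2$ solves the linear non-delay parabolic equation \eqref{main-eq-dif}, namely $\dot z+Az=f$ with $f=F(u^2_t)-F(u^1_t)+G(u^2(t))-G(u^1(t))$, and to run a weighted parabolic smoothing (multiplier) argument on $[r,t]$. Throughout set $\eps:=|\va^1-\va^2|_C$ and fix $\vr\ge \max_{[0,T]}\{|u^1_t|_\cCL+|u^2_t|_\cCL\}$, which is finite by Theorem~\ref{th:wp} (see \eqref{dot-sol2}). From \eqref{lip-weak} we already have the baseline bound $\sup_{[0,t]}\|z\|^2+\int_0^t\|A^{1/2}z\|^2\le C_R(T)\eps^2$; in particular $|z_t|_C\le C\eps$ for every $t$ (using $\|z(\theta)\|\le\eps$ on $[-r,0]$), and $|\eta(u^1_t)-\eta(u^2_t)|\le L_\eta|z_t|_C\le C\eps$ by \eqref{eta-lip}. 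Note also $\eps\le 2R$.

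The key step --- and the reason the weight and the integration start precisely at $r$ --- is to upgrade the delay part $f_F:=F(u^2_t)-F(u^1_t)$ from $H_{-1/2}$ (where \eqref{sdd17-F1} places it) to $H$ for $t\ge r$. Writing $\eta_i=\eta(u^i_t)$ and splitting
\[
f_F=\big[F_0(u^2(t-\eta_2))-F_0(u^1(t-\eta_2))\big]+\big[F_0(u^1(t-\eta_2))-F_0(u^1(t-\eta_1))\big],
\]
the first bracket is bounded in $H$ by $L_F\|z(t-\eta_2)\|\le L_F|z_t|_C\le C\eps$ via \eqref{F-lip} with $\al=0$. For the second bracket, Lipschitzness of $F_0$ gives $L_F\|u^1(t-\eta_2)-u^1(t-\eta_1)\|$; since $t\ge r$ forces the delayed times $t-\eta_i\ge0$ to lie in the region where $\dot u^1\in L^2(0,T;H)$ (by \eqref{b-sdd9-14fin}), Cauchy--Schwarz yields $\|u^1(t-\eta_2)-u^1(t-\eta_1)\|\le|\eta_1-\eta_2|^{1/2}\,\|\dot u^1\|_{L^2(0,T;H)}\le C\eps^{1/2}$. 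Together with \eqref{G-lip} this gives, for $t\ge r$, the bound $\|f(t)\|^2\le C(\vr)\big[\|z(t)\|_{1/2}^2+\eps\big]$. The appearance of $\eps^{1/2}$ here (hence $\eps$ after squaring) is exactly what produces the \emph{linear} dependence on $|\va^1-\va^2|_C$ on the right of \eqref{b-sdd9-14dif}. This is the heart of the proof: for $t<r$ the delayed times may reach the initial segment, where $\va^i$ is only Lipschitz in $H_{-1/2}$, so $f_F$ stays merely $H_{-1/2}$ and no such $H$-bound is available.

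Granting the $H$-bound on $f$, I would multiply \eqref{main-eq-dif} by $(\tau-r)Az(\tau)$ in $H$ and integrate over $[r,t]$. Using $\langle\dot z,Az\rangle=\tfrac12\tfrac{d}{d\tau}\|A^{1/2}z\|^2$, integrating the weighted time derivative by parts (the boundary term at $\tau=r$ vanishes), and estimating $(\tau-r)\langle f,Az\rangle\le\tfrac12(\tau-r)\|Az\|^2+\tfrac12(\tau-r)\|f\|^2$ to absorb the $\|Az\|^2$ term, I obtain
\[
(t-r)\|A^{1/2}z(t)\|^2+\int_r^t(\tau-r)\|Az(\tau)\|^2\,d\tau\le \int_r^t\|A^{1/2}z(\tau)\|^2\,d\tau+\int_r^t(\tau-r)\|f(\tau)\|^2\,d\tau.
\]
Both terms on the right are $\le C_T(\vr)\eps$ by the baseline bound and the $H$-estimate on $f$ (with $\eps^2\le 2R\eps$). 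The remaining quantities in \eqref{b-sdd9-14dif} follow from $\dot z=f-Az$: the weighted $L^2$-bound via $\int_r^t(\tau-r)\|\dot z\|^2\le 2\int_r^t(\tau-r)(\|f\|^2+\|Az\|^2)$, and the pointwise term via $\|A^{-1/2}\dot z(t)\|\le\|A^{-1/2}f(t)\|+\|A^{1/2}z(t)\|$, where $\|A^{-1/2}f(t)\|\le C\eps+C\|A^{1/2}z(t)\|$ by \eqref{sdd17-F1} and \eqref{G-lip}, so that $(t-r)\|A^{-1/2}\dot z(t)\|^2$ is controlled by the already bounded $(t-r)\|A^{1/2}z(t)\|^2$ plus $(t-r)\eps^2\le C_T\eps$.

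Finally, for the $\hf$-H\"older continuity in $\cCL$ I estimate the three terms of $|S_t\va^1-S_t\va^2|_\cCL=|z_t|_\cCL$ separately: $\max_{[-r,0]}\|z_t\|\le C\eps$ by the baseline; $\|A^{1/2}z_t(0)\|=\|A^{1/2}z(t)\|\le (C_T\eps/(t-r))^{1/2}$ directly from \eqref{b-sdd9-14dif}; and ${\rm Lip}_{[-r,0]}(A^{-1/2}z_t)={\rm ess\,sup}_{\tau\in[t-r,t]}\|A^{-1/2}\dot z(\tau)\|$. The last is handled by applying the pointwise part of \eqref{b-sdd9-14dif} at each $\tau$, which is immediate once the whole delay window $[t-r,t]$ avoids the degeneracy at $\tau=r$, i.e. for $t>2r$, where it yields a bound of order $(\eps/(t-2r))^{1/2}$. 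The genuinely delicate point --- and the one I expect to be the main obstacle --- is the short range $r<t\le 2r$: there $[t-r,t]$ dips below $r$, the $H$-regularity of $f_F$ is lost, and controlling $\|A^{-1/2}\dot z(\tau)\|$ near $\tau=r$ from below requires a more careful transfer of the regularity gained above across the entire delay interval rather than the naive use of \eqref{b-sdd9-14dif}.
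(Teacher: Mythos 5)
Your derivation of \eqref{b-sdd9-14dif} is essentially the paper's own argument. The key lemma is exactly the paper's estimate \eqref{F-lip-l2}: the same splitting of $F(u^2_t)-F(u^1_t)$, the same use of \eqref{eta-lip} together with Cauchy--Schwarz against $\dot u\in L^2(0,T;H)$ to produce the square-root (hence linear in $|\va^1-\va^2|_C$ after squaring) gain, and the same reason for restricting to $t\ge r$. Your weighted multiplier $(\tau-r)Az$ is the identical computation to the paper's route of integrating the unweighted differential inequality over $[\tau,t]$ (their \eqref{b2-sdd-holder}) and then integrating in $\tau$ over $[r,t]$ with a change of the order of integration; the recovery of the $\dot z$ terms from $\dot z=f-Az$ also matches. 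Two remarks. First, you omit the closedness claim for $t\in(0,r]$ entirely; the paper dispatches it in one line from \eqref{lip-weak} (if $\va_n\to\va$ and $S_t\va_n\to\psi$ in $\cCL$, then \eqref{lip-weak} forces $S_t\va_n\to S_t\va$ in $C$, whence $\psi=S_t\va$), and you should include at least that much. Second, the obstacle you flag for $r<t\le 2r$ is genuine and is \emph{not} resolved in the paper, whose proof of the H\"older claim consists of the single sentence that it ``follows from \eqref{b-sdd9-14dif}''. As you observe, the seminorm ${\rm Lip}_{[-r,0]}(A^{-1/2}z_t)$ requires $\|A^{-1/2}\dot z(\tau)\|$ on all of $[t-r,t]$, while \eqref{b-sdd9-14dif} degenerates as $\tau\downarrow r$ and gives nothing for $\tau\le r$; there one only has $\|A^{-1/2}\dot z(\tau)\|\le C\left(\|A^{1/2}z(\tau)\|+|\va^1-\va^2|_C\right)$, and no pointwise $O(|\va^1-\va^2|_C^{1/2})$ bound on $\|A^{1/2}z(\tau)\|$ is available for $\tau\in(0,r]$, since the delayed argument can fall in the initial segment where the difference $\|\va^1(\tau-\eta_1)-\va^1(\tau-\eta_2)\|$ is controlled only by the modulus of continuity of $\va^1$ in $H$. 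So the written argument yields the $\hf$-H\"older bound cleanly only for $t>2r$ (or for the weaker quantity consisting of the endpoint terms plus $|z_t|_C$); this gap is the paper's, not yours, and it is harmless for the later use of the proposition, which only needs continuity of $S_t$ for large $t$ on the absorbing set.
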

  \begin{proof}
  Multiplying \eqref{main-eq-dif} by  $Az$ and using \eqref{b-sdd9-14fin} and \eqref{G-lip} we obtain that
\begin{align*}
 \frac{d}{dt} ||A^{1/2}z(t)||^2 + ||A z(t)||^2 &\le
C\| F(u^2_t)-F(u^1_t)\|^2  +C_R(T) ||A^{1/2}z(t)||^2,\, t>0.
\end{align*}
From \eqref{b-sdd9-14fin}, \eqref{F-lip} and \eqref{eta-lip} we also have that
 \begin{align}\label{F-lip-l2}
\| F(u^2_t)-F(u^1_t)\|^2 & \le L_F \left|
\int_{t-\eta(u^1_t)}^{t-\eta(u^2_t)} \|\dot u^2(\xi)\|d\xi + |
u^2_t- u^1_t|_C\right|^2 \notag
\\
&\le 2 L_F\left[ \left|\eta(u^1_t)-\eta(u^2_t)\right|
 \int_{t-r}^{t}
\|\dot u^2(\xi)\|^2d\xi + | u^2_t- u^1_t|_C^2\right] \le  C_T(R)  |
u^2_t- u^1_t|_C
 \end{align}
 for every $t\ge r$.
 Therefore
 \begin{align*}
 \frac{d}{dt} ||A^{1/2}z(t)||^2 + ||A z(t)||^2 &\le
 C_T(R)\left[ \max_{[0,t]}\| z(s)\|^2  +
 ||A^{1/2}z(t)|^2\right]^{1/2},
 ~~t\ge r.
\end{align*}
Integrating  over interval $[\tau, t]$ with $\tau\ge r$ and
 using \eqref{lip-weak} we obtain that
 \begin{equation} \label{b2-sdd-holder}
 ||A^{1/2}z(t)||^2 + \int_\tau^t||A z(\xi)||^2 d\xi \le ||A^{1/2}z(\tau)||^2+
 C_T(R) |\va^1-\va^2 |_C ,~~t\ge\tau\ge r.
\end{equation}
Now we integrate \eqref{b2-sdd-holder} with respect to
$\tau$ over  $[r, t]$, change the order of integration, and use \eqref{lip-weak}
to get
$$
(t-r)||A^{1/2}z(t)||^2 + \int^t_{r} (\xi-r) ||Az(\xi)||^2 \, d\xi   \le
C_T(R) |\va^1-\va^2 |_C ,~~t\ge r.
$$
Using the expression for $\dot z$ from
\eqref{main-eq-dif} and also the bounds in \eqref{lip-weak} and
\eqref{F-lip-l2} we have that
$$
||\dot z(t)+Az(t)||^2 +||A^{-1/2}\dot z(t)||^2 \le C_T(R)
\left[ \|A^{1/2} z(t)\|^2+ |\va^1-\va^2|_C\right],~~t\ge r.
$$
This implies \eqref{b-sdd9-14dif}.
\par
The $\hf$-H\"older continuity of the evolution
semigroup $S_t$ in the norm of $\cCL$ follows from \eqref{b-sdd9-14dif}.
\par
 The closedness of $S_t$ for $t\in (0,r]$ easily follows from
\eqref{lip-weak}.
 \end{proof}

 \begin{remark}\label{re:L0}
 {\rm As it follows from \eqref{F-lip-l2} we can obtain a $\hf$-H\"{o}lder continuity
  relation like
 \eqref{b-sdd9-14dif} \emph{for all} $t\ge 0$ if we assume
 in addition that one of initial data
 $\va^i$ possesses the property $\dot\va^i\in L_2(-r,0;H)$. In this case
 the argument above leads to the relation
 \begin{multline}\label{b-sdd9-14dif+0}
||A^{-1/2}\dot z(t)||^2+||A^{1/2}z(t)||^2 + \int^t_0\left[
||\dot z(t)||^2+ ||Az(\tau)||^2\right] \, d\tau  \\ \le C_T(R)\left[ \|A^{1/2}(\va^1(0)-\va^2(0))\|+
|\va^1-\va^2 |_C \right]
\end{multline}
for all $t\in [0,T]$ and  for all initial data $\va^i$ such that
$|\va^i|_\cCL +|\dot\va^i|_{ L^2(-r,0;H)} \le R$. Moreover,   one
can also see that the set
\begin{equation}\label{L0-def}
    \cCL_0=\left\{ \va\in \cCL\, :\; \dot\va\in L^2(-r,0;H)\right\}
\end{equation}
is forward invariant \wrt $S_t$.
 Thus $\va\mapsto S_t \varphi$ is a $\hf$-H\"{o}lder continuous mapping
 for each $t\ge 0$ on the Banach space $\cCL_0$
 endowed with the norm
$|\va|_{\cCL_0}=|\va|_\cCL +|\dot\va|_{ L^2(-r,0;H)}$.
Hence the dynamical (in the classical sense, see, e.g.,
 \cite{Babin-Vishik,Chueshov_Acta-1999_book,Temam_book}) system $(\cCL_0,S_t)$
 arises. However we prefer to avoid property $\dot\va\in L_2(-r,0;H)$
in the description of the phase space. The point is that our goal is
long-time dynamics and it is well-known (see, e.g.,
\cite{Babin-Vishik,Chueshov_Acta-1999_book,Temam_book}) that the
existence of limiting objects requires some compactness properties.
Unfortunately we cannot guarantee these properties in  the space
$\cCL_0$ without serious restrictions concerning the delay term.
This is why we prefer to use the observation made in
\cite{PataZelik-cls} concerning closed evolutions.
 }
 \end{remark}
 \begin{remark}\label{re:cont-in-t}
 {\rm
A similar problem  as above we have with \emph{time} continuity of
evolution operator $S_t$. It is clear from \eqref{def-smooth} and
\eqref{deriv-est} that $t\mapsto S_t \varphi$  is continuous for
every $\va\in\cCL$  when  $t> r$. To guarantee the continuity $
t\mapsto S_t \varphi$ \emph{for all} $t\ge 0$ we need make further
restriction\footnote{We refer to some discussion in
\cite{Rezounenko_NA-2010,Rezounenko-Zagalak-DCDS-2013} for the
related PDE models.} on initial data. The main restriction is a
compatibility condition at time $t=0$. To describe    this condition
we introduce the following (complete)  metric space
\begin{equation}\label{sdd10-36}
X \equiv \left\{ \varphi \in C^1([-r,0];H_{-1/2})\cap C([-r,0];H) \left|
\begin{array}{l}
 \varphi (0)\in H_{1/2};   \\
   \dot \varphi (0) + A \varphi (0)
+ F(\varphi)+G(\va(0))=0
\end{array} \right.\right\}
\end{equation}
Here the compatibility condition $\dot \varphi (0) + A \varphi (0)+
F(\varphi)+ G(\varphi (0))=0$ is understood as an equality in $H_{-1/2}$.
The distance in $X$ is given by the relation
\begin{equation}\label{x-norm}
{\rm dist}_X(\va,\psi)=\max_{[-r,0]}\left\{ ||A^{-1/2}(\dot
\va(\theta) -\dot\psi(\theta))||+ ||
\va(\theta) -\psi(\theta)||\right\} +||A^{1/2}(\va(0) -\psi(0))||.
\end{equation}
One can see that $X$ is a closed subset in  the Banach space $\cCL$
and the topology generated by the  metric ${\rm dist}_X$ coincides
with the induced topology of $\cCL$ (see \ref{sdd9-9}).
}
 \end{remark}
\par
In the following assertion we collect several dynamical properties
of the evolution semigroup $S_t$ which are direct consequences
of Theorem~\ref{th:wp} and Proposition \ref{pr:ini-data}
and Remark \ref{re:cont-in-t}.
\begin{proposition}\label{pr:closed-ev}
Under the conditions of Theorem~\ref{th:wp} problem \eqref{main-eq}
generates an evolution semigroup $S_t$ of closed mappings  on $\cCL$
such that
\begin{enumerate}
  \item[\textbf{(a)}] $S_t\cCL\subset X$ for every $t\ge r$ and the set
  $S_tB$ is bounded in $X$ for each $t\ge r$ when $B$ is bounded in the space $\cCL$;
 \item[\textbf{(b)}] the set $X$ is forward invariant: $S_tX\subset X$;
  \item[\textbf{(c)}]
  the mapping
$\va\mapsto S_t \varphi$ is a $\hf$-H\"{o}lder continuous on $\cCL$
 (and hence on $X$) for all $t>r$;
 \item[\textbf{(d)}] the trajectories $t\mapsto S_t\va$  are continuous
 for $t> r$ and $\va\in \cCL$. If $\va\in X$, then these
 trajectories
 are continuous for all $t\ge 0$.
\end{enumerate}
\end{proposition}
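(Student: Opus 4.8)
The plan is to verify properties \textbf{(a)}--\textbf{(d)} by assembling the estimates already established in Theorem~\ref{th:wp}, Proposition~\ref{pr:ini-data}, and Remarks~\ref{re:def-sol} and~\ref{re:cont-in-t}. The key observation is that each assertion is essentially a repackaging of a previously proved bound, so the main work is bookkeeping rather than new analysis. I would begin with the closedness claim, which underlies the whole statement: for $t>r$ closedness is automatic since $S_t$ is even $\hf$-H\"older continuous (Proposition~\ref{pr:ini-data}), and for $t\in(0,r]$ closedness was established at the end of the proof of Proposition~\ref{pr:ini-data} from the ``almost Lipschitz'' bound \eqref{lip-weak}. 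The semigroup property $S_{t+s}=S_tS_s$ follows from uniqueness of strong solutions in Theorem~\ref{th:wp} together with the autonomy of \eqref{main-eq}.

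For part \textbf{(a)}, the plan is to show that any strong solution with $\va\in\cCL$ has its history segment $u_t$ lying in $X$ once $t\ge r$. The point is that for $t\ge r$ the whole window $[t-r,t]$ lies in the region where the solution enjoys the extra regularity from \eqref{deriv-est} and \eqref{b-sdd9-14fin}: one has $\dot u\in C(0,T;H_{-1/2})$ and $u\in C([0,T];H_{1/2})$, so $u_t\in C^1([-r,0];H_{-1/2})\cap C([-r,0];H)$ with $u_t(0)=u(t)\in H_{1/2}$, and the compatibility identity defining $X$ in \eqref{sdd10-36} is simply equation \eqref{main-eq} evaluated at time $t$, which holds in $H_{-1/2}$ by Remark~\ref{re:def-sol}. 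Boundedness of $S_tB$ in $X$ for bounded $B\subset\cCL$ follows by collecting the uniform bounds \eqref{b-sdd9-14fin} (which control $\|A^{1/2}u(t)\|$ and $\|A^{-1/2}\dot u(t)\|$) together with \eqref{lin-b-F} and \eqref{G-lip} to bound $\dot u$ uniformly in $H_{-1/2}$ over the window, matching the three terms in the metric \eqref{x-norm}.

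Part \textbf{(b)}, forward invariance $S_tX\subset X$, is then immediate: if $\va\in X\subset\cCL$ then by part \textbf{(a)} we already have $S_t\va\in X$ for $t\ge r$, and for $t\in(0,r]$ one uses that $\va\in X$ supplies the compatibility condition at the left endpoint so that the solution is smooth enough on the whole window $[t-r,t]$ (this is exactly the content of Remark~\ref{re:cont-in-t}, where $X$ was constructed precisely as the set on which $t\mapsto S_t\va$ is continuous up to $t=0$). Part \textbf{(c)} is a direct restatement of the $\hf$-H\"older continuity proved in Proposition~\ref{pr:ini-data} for $t>r$, and restricts to $X$ since $X\subset\cCL$ carries the induced topology. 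Part \textbf{(d)} combines Remark~\ref{re:cont-in-t} (continuity of $t\mapsto S_t\va$ for $t>r$ and all $\va\in\cCL$, from \eqref{deriv-est}) with the defining property of $X$, which forces continuity down to $t=0$.

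The one point that deserves genuine care, rather than mere citation, is the claim in \textbf{(b)} and \textbf{(d)} for small times $t\in(0,r]$ with $\va\in X$: here the history segment $u_t$ straddles the matching time $t=0$, so its smoothness must be checked at the seam between the prescribed initial data $\va$ on $[t-r,0]$ and the computed solution on $[0,t]$. The compatibility condition in \eqref{sdd10-36} is exactly what guarantees that $\dot u$ has no jump across $\theta=0$ in $H_{-1/2}$, so that $u_t\in C^1([-r,0];H_{-1/2})$ and the compatibility relation propagates. I expect this continuity-at-the-seam verification to be the main obstacle, but Remark~\ref{re:cont-in-t} already records the needed fact, so in the end the proof reduces to citing it. All remaining estimates are quantitative consequences of \eqref{b-sdd9-14fin}, \eqref{lip-weak}, and \eqref{b-sdd9-14dif}.
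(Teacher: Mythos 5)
Your proposal is correct and follows essentially the same route as the paper: the paper states Proposition~\ref{pr:closed-ev} without a separate proof, declaring it a direct consequence of Theorem~\ref{th:wp}, Proposition~\ref{pr:ini-data} and Remark~\ref{re:cont-in-t}, and your assembly of those ingredients (including the correct identification of the only delicate point, namely the matching of $\dot u$ across $t=0$ via the compatibility condition for $t\in(0,r]$) is exactly the intended argument.
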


\section{Long time dynamics}
This section is central for the whole paper. Here we study long-time
dynamics of the delay model generated by  \eqref{main-eq}  and
\eqref{sdd9-ic}. The main result stated below in
Theorem~\ref{th:attr} deals with finite-dimensional global and
exponential attractors. We refer to the Appendix for the
corresponding definitions and the auxiliary facts which we use in
our argument.

We first impose the standard hypotheses (see, e.g.,
\cite{Temam_book}) concerning the nonlinear
(non-delayed)  sink/source term $G$.
\begin{assumption}\label{as:G2}
  {\rm
 The nonlinear mapping $G: H_{1/2} \to H$ has the form
 $G(u) = \Pi^\prime(u)$. Here
 $\Pi(u)=\Pi_0(u) + \Pi_1(u)$, where
$\Pi_0(u)\ge 0$  is bounded on bounded sets in
$D(A^{1/2})$ and $\Pi_1(u)$ satisfies the property
\begin{equation}\label{sdd-2nd-08}
\forall\,\eta>0\; \exists\, C_\eta>0:~~
 |\Pi_1(u)| \le \eta \left( || A^{1\over 2}u||^2 + \Pi_0(u)\right) + C_\eta, \qquad u \in H_{1/2}. 
\end{equation}
Moreover, we assume that
 \par\noindent
{\bf (a)}
 there are constants $\eta\in [0,1), c_4,c_5>0$
such that
\begin{equation}\label{sdd-2nd-19}
- \langle u,G(u)\rangle\le \eta ||A^{1/2}u||^2 -c_4 \Pi_0(u)
+c_5,\quad u\in H_{1/2}; 
\end{equation}
{\bf (b)}
for every $\widetilde{\eta}>0$ there exists $C_{\widetilde{\eta}}>0$
such that
\begin{equation}\label{sdd-2nd-20}
||u||^2\le C_{\widetilde{\eta}} +\widetilde{\eta}\left( ||A^{1\over
2}u||^2 +\Pi_0(u) \right), \quad u\in H_{1/2}. 
\end{equation}
}
\end{assumption}
In the case of parabolic models like \eqref{sdd9-1}  examples of
functions $g(u)$ such that the corresponding Nemytskii operator
satisfies Assumptions \ref{as:basic}(G) and \ref{as:G2} can be found
in \cite{Babin-Vishik} and \cite{Temam_book}. The simplest one is
$g(u)=u^3+a_1u^2+a_2 u$ with arbitrary $a_1,a_2\in \R$ in the case
when $\Omega$ is a 3D domain.
\par
Our main result is the following assertion.
\begin{theorem}\label{th:attr}
  Let Assumptions~\ref{as:basic}  and \ref{as:G2} be in force.
  Suppose that $S_t$ is the evolution semigroup generated
 in $\cCL$ by  \eqref{main-eq}  and \eqref{sdd9-ic}. Then
there exists $\ell_0>0$ such that
 this semigroup possesses a compact connected global
 attractor $\fA$
provided $m_F r< \ell_0$, where $r$ is the delay time and
$m_F$ is the linear growth constant for $F_0$ in $H$
defined by the relation
\begin{equation}\label{mF-def}
m_F=\limsup_{\|u\|\to+\infty}\frac{\|F_0(u)\|}{\|u\|}.
\end{equation}
  Moreover,  for every $0<\beta\le 1$ and $\al<\min\{\beta,1/2\}$
  this attractor belongs to the set
  \begin{equation}\label{dis-comp}
    D_{\al,\beta}^R=\left\{ \varphi\in X\, \left|\begin{array}{r}
                                                 |A^{1-\beta}\va |_C+  |A^{-\beta}\dot\va|_C
                                                  +{\rm Hold}_\al(A^{1-\beta}\va)
                                                 +{\rm Hold}_\al(A^{-\beta}\dot\va)
                                                 \\ \displaystyle
                                                 +\left[ \int_{-r}^0\left( \|A^{1/2}\va (\theta)\|^2+ \|\dot\va(\theta)\|^2\right) d\theta\right]^{1/2}
                                                 \le R
                                               \end{array}\right. \right\}
  \end{equation}
 for some $R=R(\al,\beta)$, where the H\"{o}lder seminorm ${\rm Hold}_\al(\psi)$
 is given by
 \[
 {\rm Hold}_\al(\psi)=\sup\left\{\frac{\|\psi(t_1)-\psi(t_2)\|}{|t_1-t_2|^\al}\, :\;
 t_1\neq t_2, ~t_1,t_2\in [-r,0]\right\}.
 \]
 Assume in addition that there exist $\ga,\delta>0$ such that
 {\bf (a)} the mapping $F_0$ is globally Lipschitz from $H_{-\ga}$ into $H_{-1/2+\delta}$,
 i.e.,
 \begin{equation}\label{F0-fd}
    \|F_0(u)-F_0(v)\|_{-1/2+\delta}\le c   \| u-v\|_{-\ga},~~ u,v\in H_{-\ga};
 \end{equation}
 and {\bf (b)} the mapping $G$ is 
 globally Lipschitz from $H_{-\ga}$ into $H_{-1/2+\delta}$,
 i.e.,
 \begin{equation}\label{G-fd}
    \|G(u)-G(v)\|_{-1/2+\delta}\le c(R)   \| u-v\|_{1/2-\ga},~~ u,v\in H_{1/2-\ga},~~
    \|u\|, \|v\|\le R.
 \end{equation}
 Then
 \begin{enumerate}
 \item[\textbf{(A)}] The global attractor $\fA$ has
 finite fractal dimension.
  \item[\textbf{(B)}] There exists a fractal exponential attractor $\fA_{\rm exp}$.
 \end{enumerate}
\end{theorem}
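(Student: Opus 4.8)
\medskip
\noindent
\textbf{Proof proposal.}
The plan is to combine the parabolic smoothing of Proposition~\ref{pr:ini-data} with the method of quasi-stability estimates, carrying out all dynamical arguments in the regime $t>r$ where $S_t$ is continuous (in fact $\hf$-H\"older) and then transporting the conclusions to the closed evolution through the Pata--Zelik framework \cite{PataZelik-cls}. First I would establish dissipativity. Exploiting the potential structure $G=\Pi'$ and the coercivity relations \eqref{sdd-2nd-19}--\eqref{sdd-2nd-20} of Assumption~\ref{as:G2}, I would construct a Lyapunov-type functional built from the energy $\hf\|A^{1/2}u(t)\|^2+\Pi(u(t))$ augmented by a weighted history integral of the form $\int_{-r}^0 w(\theta)\,\|u(t+\theta)\|^2\,d\theta$ designed to absorb the memory. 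Testing \eqref{main-eq} with $u$ (and, if needed, with $\dot u$), the linear bound \eqref{lin-b-F} produces a delay contribution whose coefficient in the large-$\|u\|$ regime is governed by the asymptotic growth rate $m_F$ of \eqref{mF-def} rather than the global Lipschitz constant; the smallness condition $m_F r<\ell_0$ is precisely what allows the spectral dissipation (via $\lambda_1>0$) to dominate this term and to close a delay differential inequality of the form $\tfrac{d}{dt}E+\omega E\le C$, yielding a bounded absorbing set $\cB_0\subset\cCL$.

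For asymptotic compactness I would use the regularity gain. The a priori bound \eqref{b-sdd9-14fin} together with the H\"older-in-time estimate underlying \eqref{b-sdd9-14dif} shows that $S_t$ maps the absorbing ball into the set $D_{\al,\beta}^R$ of \eqref{dis-comp} once $t\ge r$. Since $D_{\al,\beta}^R$ is compact in $\cCL$ by the Arzel\`a--Ascoli theorem and the compactness of the embeddings $H_s\hookrightarrow H_{s'}$ for $s>s'$, the semigroup is asymptotically compact. As $S_t$ is merely closed for $t\le r$ (Proposition~\ref{pr:closed-ev}), I would invoke the extension of attractor theory to closed evolutions from \cite{PataZelik-cls} to obtain the compact connected global attractor $\fA$, which by construction lies in $D_{\al,\beta}^R$.

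To prove finite-dimensionality (A) and the exponential attractor (B) I would verify a quasi-stability estimate on $\fA$. For two trajectories on the attractor the difference $z=u^1-u^2$ solves the linear problem \eqref{main-eq-dif}, and here the additional hypotheses \eqref{F0-fd}, \eqref{G-fd} are decisive: they let me estimate the forcing $f$ in the improved norm of $H_{-1/2+\delta}$ by a weaker (hence compact relative to $\cCL$) norm of $z$. Testing \eqref{main-eq-dif} against $Az$ and against $z$ and using the spectral gap, I would derive the inequality $\|z_t\|_{\cCL}^2\le C e^{-\omega t}\|z_0\|_{\cCL}^2+C\sup_{s\in[0,t]}\mu(z_s)^2$ with $\mu$ a seminorm compact with respect to the $\cCL$-norm, which is exactly the quasi-stability estimate of \cite{Chueshov-Lasiecka-2010_book,CL-hcdte-notes}. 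Applying the abstract criterion for quasi-stable systems (valid on the range $t>r$ where $S_t$ is continuous, then iterated in discrete time) gives finite fractal dimension of $\fA$, proving (A). Combining the same estimate with the $\hf$-H\"older continuity in time and in initial data from Proposition~\ref{pr:ini-data} and the abstract construction of exponential attractors for quasi-stable systems produces $\fA_{\rm exp}$, proving (B).

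The main obstacle is the state dependence of the delay, which renders $F$ only ``almost Lipschitz'' in the sense of \eqref{sdd17-F1} rather than Lipschitz on the energy space. In the quasi-stability step the difference $F(u^1_t)-F(u^2_t)$ carries the genuinely nonlinear term $|\eta(u^1_t)-\eta(u^2_t)|\int_{t-r}^t\|\dot u^2(\xi)\|^2\,d\xi$ appearing in \eqref{F-lip-l2}; controlling it forces the use of the $L^2$-in-time bound on $\dot u^2$ available only on the more regular invariant set $D_{\al,\beta}^R$, and it is exactly this term that makes the dependence $\hf$-H\"older rather than Lipschitz. Reconciling this loss of Lipschitz continuity with the quasi-stability machinery, which is classically formulated for Lipschitz perturbations, is the delicate point, and it is what the extra smoothing assumptions \eqref{F0-fd}--\eqref{G-fd} are designed to overcome.
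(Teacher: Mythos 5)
Your overall architecture coincides with the paper's: a Lyapunov functional with a delay compensator for dissipativity under the smallness condition $m_Fr<\ell_0$, parabolic smoothing into the H\"older class $D^R_{\al,\beta}$ plus Arzel\`a--Ascoli for a compact absorbing set, the Pata--Zelik theorem for closed semigroups to get $\fA$, and a quasi-stability estimate (exploiting \eqref{F0-fd}--\eqref{G-fd}) on a forward-invariant absorbing set where $S_t$ is continuous, followed by the abstract quasi-stability theorems for dimension and for the exponential attractor. Two points, however, deserve attention.

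The substantive one is your compensator. You propose to augment the energy by $\int_{-r}^0 w(\theta)\,\|u(t+\theta)\|^2\,d\theta$. For a \emph{discrete} state-dependent delay this does not close the estimate: by \eqref{lin-b-F} the term to be absorbed is $\|u(t-\eta(u_t))\|^2$, a point evaluation at a moving instant, and no weighted $L^2$-in-time history of $\|u\|^2$ dominates a pointwise value (you would need $|u_t|_C^2$, which the integral does not control). The paper's device is different in a way that matters: it writes $u(t-\eta(u_t))=u(t)-\int_{t-\eta(u_t)}^{t}\dot u(\theta)\,d\theta$, bounds the integral by Cauchy--Schwarz through $r^{1/2}\bigl(\int_0^r\|\dot u(t-\xi)\|^2 d\xi\bigr)^{1/2}$, and accordingly takes the compensator to be a weighted history integral of $\|\dot u\|^2$, namely $\frac{\mu}{r}\int_0^r\int_{t-s}^t\|\dot u(\xi)\|^2 d\xi\, ds$. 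It is precisely this choice that produces the coefficient $-\frac{\mu}{r}+a_2M_F^2 r$ in front of $\int_0^r\|\dot u(t-\xi)\|^2 d\xi$ and hence the condition $m_Fr<\ell_0$; with your compensator the smallness condition would not emerge and the differential inequality $\frac{d}{dt}\widetilde V+\ga\widetilde V\le C$ would not close. So as written this step fails, although the repair is exactly the paper's substitution of $\|\dot u\|^2$ for $\|u\|^2$ in the history term.

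A minor methodological difference: for the quasi-stability estimate \eqref{qs-eq} the paper does not test the difference equation against $Az$ and $z$; it works with the mild (variation-of-constants) form and uses the analytic-semigroup bounds $\|A^{1-\delta}e^{-At}\|\le C t^{-(1-\delta)}$ to convert the $H_{-1/2+\delta}$ gain of \eqref{F0-fd}--\eqref{G-fd} into an $H_{1/2}$ estimate with a compact seminorm $\max_s\|A^{1/2-\beta}(u^1(s)-u^2(s))\|$. Your energy-testing route is plausible but would have to be checked to actually exploit the $\delta>0$ smoothing; the mild formulation does so automatically and is the cleaner path here.
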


We devote the remaining subsections to the proof of Theorem~\ref{th:attr}.

\subsection{Existence of a global attractor}
To prove the existence of a global attractor it is sufficient to
show that the evolution operator possesses a compact absorbing set.
In this case we can apply the standard existence result in the form
given in \cite{PataZelik-cls} for closed semigroups
(see the Appendix for more details).
\par
We start with the existence of a bounded absorbing set.

\begin{proposition}[Bounded dissipativity]\label{pr:diss}
Assume that $u(t)$ solves \eqref{main-eq} and \eqref{sdd9-ic} with
$\va\in\cCL$. Then one can find  $\ell_0>0$ such that for every delay time $r$
such that
 $m_F r< \ell_0$ 
the following property holds: there exists $R_*$ such that  for
every bounded set $B$ in $\cCL$ there is $t_B$ such that
\begin{equation}\label{dis-est-str}
||A^{-1/2}\dot u(t)||^2+||A^{1/2}u(t)||^2 + \int^{t+1}_{t}\left[  ||\dot u(\tau)||^2+ ||Au(\tau)||^2\right] \, d\tau   \le R^2_*
\end{equation}
for all $t\ge t_B$ and  for all initial data $\va\in B$. This yields
that  the evolution semigroup $S_t$ is dissipative on both $\cCL$
and
$X$ provided $m_F r< \ell_0$.
\end{proposition}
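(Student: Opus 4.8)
\subsection*{Proof plan}

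The plan is to exploit the gradient structure of the non-delayed part together with a Lyapunov--Krasovskii functional that converts the history appearing in $F$ into an integral of $\|\dot u\|^2$ over the delay window; the smallness condition $m_F r<\ell_0$ will be exactly what lets this converted term be absorbed by the parabolic dissipation. Since for $\va\in\cCL$ we only control $\dot\va\in L^\infty(-r,0;H_{-1/2})$ and not $\dot\va\in L^2(-r,0;H)$, the history integral need not be finite at $t=0$; I would therefore run the dissipativity argument only for $t\ge r$, where $u_t$ consists entirely of the solution and $\dot u\in L^2$ by Theorem~\ref{th:wp}. On the transient interval $[0,r]$ the a priori bound \eqref{b-sdd9-14fin} already gives control in terms of $|\va|_\cCL$, so it suffices to produce a ball in $H_{1/2}$ that is absorbing for the values $u(t)$, $t\ge r$, and then to bootstrap.

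First I would record the two basic multiplier identities. Testing \eqref{main-eq} with $\dot u$ and using $G=\Pi'$ gives the energy relation $\frac{d}{dt}E(u)+\|\dot u\|^2=-\langle F(u_t),\dot u\rangle$ with $E(u)=\hf\|A^{1/2}u\|^2+\Pi(u)-\langle h,u\rangle$; testing with $u$ and invoking \eqref{sdd-2nd-19} yields the coercive relation $\hf\frac{d}{dt}\|u\|^2+(1-\eta)\|A^{1/2}u\|^2+c_4\Pi_0(u)\le c_5-\langle F(u_t),u\rangle+\langle h,u\rangle$. The assumptions \eqref{sdd-2nd-08} and \eqref{sdd-2nd-20} make $E$ equivalent, up to additive constants, to $\|A^{1/2}u\|^2+\Pi_0(u)$, so controlling $E$ controls the $H_{1/2}$-norm. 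The delay term is handled through the growth bound $\|F(u_t)\|\le (m_F+\eps)|u_t|_C+C_\eps$ (which follows from \eqref{mF-def} and $t-\eta(u_t)\in[t-r,t]$) together with the key elementary estimate
\[
|u_t|_C^2\le 2\|u(t)\|^2+2r\int_{t-r}^t\|\dot u(\xi)\|^2\,d\xi,\qquad t\ge r,
\]
obtained by writing $u(s)=u(t)-\int_s^t\dot u$ and applying Cauchy--Schwarz.

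Next I would introduce the Krasovskii functional $J(t)=\int_{t-r}^t(\xi-t+r)\|\dot u(\xi)\|^2\,d\xi$, which satisfies $\frac{d}{dt}J=r\|\dot u(t)\|^2-\int_{t-r}^t\|\dot u(\xi)\|^2\,d\xi$, and form the combined functional $\Phi=E(u)+\frac{\nu}{2}\|u\|^2+\kappa J$. Summing the two multiplier identities (weighted by $1$ and $\nu$) and $\kappa\frac{d}{dt}J$, estimating the delay contributions by Young's inequality, and inserting the growth bound and the history estimate, one is left with terms carrying $\|\dot u(t)\|^2$, the history integral $\int_{t-r}^t\|\dot u\|^2$, $\|A^{1/2}u\|^2$ and $\Pi_0(u)$. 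The coefficient of $\|\dot u(t)\|^2$ is essentially $1-\kappa r$, that of the history integral is essentially $\kappa$ minus a multiple of $r\,m_F^2$, and the coercivity provides a strictly negative $\|A^{1/2}u\|^2$ coefficient once $\nu$ is fixed large enough relative to the data. Choosing $\kappa$ in the window between a multiple of $r\,m_F^2$ and $1/r$ forces both the $\|\dot u(t)\|^2$ and the history coefficients to have the right sign; such a window is nonempty precisely when $m_F r$ lies below a threshold $\ell_0$ determined by $\la_1,\eta,c_4$ and the fixed constants. With these choices I obtain a differential inequality $\frac{d}{dt}\Phi+\gamma\Phi\le C$, and Gronwall's lemma gives an ultimate bound on $\Phi$, hence on $\|A^{1/2}u(t)\|^2$, uniform for $t\ge t_B$ and independent of $\va\in B$.

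Finally, the full estimate \eqref{dis-est-str} follows by bootstrapping: integrating the energy relation over $[t,t+1]$ bounds $\int_t^{t+1}\|\dot u\|^2$; testing \eqref{main-eq} with $Au$ as in the proof of Theorem~\ref{th:wp} and using the already established $H_{1/2}$-bound together with \eqref{sdd17-G} and the linear growth \eqref{lin-b-F} bounds $\int_t^{t+1}\|Au\|^2$; and reading $A^{-1/2}\dot u$ off the equation controls $\|A^{-1/2}\dot u(t)\|^2$. The dissipativity on $X$ is then immediate since $X\subset\cCL$ with equivalent induced topology. I expect the main obstacle to be the constant bookkeeping in the combined functional: the delay enters \emph{both} multiplier identities, so the history term and the Krasovskii cost must be balanced simultaneously against the parabolic dissipation, and it is exactly this balance that produces the smallness requirement $m_F r<\ell_0$ on the asymptotic growth constant rather than a condition on the (generally larger) Lipschitz constant $L_F$.
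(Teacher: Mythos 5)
Your proposal is correct and follows essentially the same route as the paper: the paper's Lyapunov functional $\widetilde V$ contains exactly your Krasovskii term (its double integral $\frac{\mu}{r}\int_0^r\int_{t-s}^t\|\dot u(\xi)\|^2\,d\xi\,ds$ equals $\frac{\mu}{r}\int_{t-r}^t(\xi-t+r)\|\dot u(\xi)\|^2\,d\xi$), uses the same $\dot u$ and $u$ multipliers with Assumption~\ref{as:G2}, bounds the delay term via $\|u(t-\eta(u_t))\|\le\|u(t)\|+\int_0^r\|\dot u(t-\xi)\|\,d\xi$, and obtains the smallness condition from the same balance ($\gamma r+4a_2M_F^2r^2\le 1$ with $\mu=1/4$). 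Your observations about restricting to $t\ge r$ and bootstrapping to the full estimate \eqref{dis-est-str} likewise match the paper's argument.
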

\begin{proof}
 We use the Lyapunov method  to get the
result. For this we consider the following functional
\begin{equation*}
\widetilde{V}(t)\equiv \hf\left[ \|u(t)\|^2 +\| A^{1/2} u(t)\|^2\right] + \Pi(u(t))+
 \frac{\mu}r \int_0^r \left\{ \int_{t-s}^t ||\dot u(\xi)||^2
d\, \xi \right\} \, ds.
\end{equation*}
defined on strong solutions $u(t)$ for $t\ge r$.
The positive parameter  $\mu$ will be chosen later.
We note that
the main idea behind inclusion of an additional delay term in $\widetilde{V}$  is to
find a compensator for the delay term in \eqref{main-eq}. This idea
was already applied in \cite{ChuRez2013-sdd} for second oder in time models with state-dependent term, see also \cite[p.480]{Chueshov-Lasiecka-2010_book} and
\cite{CLW-delay}  for the case of a flow-plate interaction model
which contains a linear constant delay term with the critical spatial
regularity. The corresponding compensator is model-dependent.
\par
One can see from \eqref{sdd-2nd-08} that there
is $0<c_0<1$ and $c,c_1>0$
such that
\begin{equation}\label{sdd-2nd-23a}
c_0 \left[\| A^{1/2} u(t)\|^2 + \Pi_0(u(t))\right] -c\le
\widetilde{V}(t)\le  c_1 \left[\| A^{1/2} u(t)\|^2 + \Pi_0(u(t))\right] +  \mu \int_0^r ||\dot u(t-\xi)||^2 d\, \xi +c.
\end{equation}
\par
We consider the time derivative of $\widetilde{V}$ along a solution.
One can easily check that
\begin{align*}
\frac{d}{dt} \widetilde{V}(t) = &  \langle  u(t),\dot u(t)\rangle +  \langle  Au(t),\dot u(t) \rangle
+ \langle G(u(t)), \dot u(t) \rangle +\frac{\mu}r \int_0^r
\left\{||\dot u(t)||^2 - ||\dot u(t-s)||^2 \right\} d\, s \\
=&
\langle \dot u(t)+ Au(t) +G(u(t)), \dot u(t) \rangle - \langle \dot
u(t),\dot u(t)\rangle + \langle u(t),\dot u(t)\rangle + \mu ||\dot
u(t)||^2  \\ &
 -\frac{\mu}r \int_0^r ||\dot u(t-\xi)||^2 d\, \xi \\
= & - \langle F(u_t) - h,\dot u(t)\rangle - (1 -\mu) ||\dot u(t)||^2
 -\frac{\mu}r \int_0^r ||\dot u(t-\xi)||^2 d\, \xi \\
 &- ||A^{1/2}u(t)||^2 -\langle F(u_t)+G(u(t))-h,u(t) \rangle.
\end{align*}
The last terms are due to \eqref{main-eq}:
$$ \langle  u(t),\dot u(t)\rangle  = -\langle  Au(t), u(t) \rangle -
\langle F(u_t)+G(u(t)) - h, u(t)\rangle.
$$
%
By the definition of $m_F$ in \eqref{mF-def}
for any number  $M_F$  greater than $m_F$
we can find $C(M_F)$ such that
\[
||F(u_t)|| \le ||F_0(u(t-\eta(u_t)))|| \le
M_F ||u(t-\eta(u_t))||+ C(M_F).
\]
Therefore
\begin{align*}
||F(u_t)|| \le & M_F
||u(t-\eta(u_t))-u(t)||+ M_F||u(t)||+ C(M_F) \\
 = & M_F \left\|
\int^t_{t-\eta(u_t)} \dot u(\theta) d\, \theta \right\| +
M_F||u(t)||+ C(M_F),
\end{align*}
and thus
\[
\|F(u_t)\|\le M_F\cdot \left[\|u(t)\|+ \int_0^r ||\dot u(t-\xi)||
d\, \xi \right]+ C(M_F), ~~t\ge r.
\]
Since
$$
\int_0^r ||\dot u(t-\xi)|| d\, \xi \le r^{1/2} \left( \int_0^r ||\dot u(t-\xi)||^2 d\, \xi \right)^{1/2},
$$
we have that
\begin{align*}
|\langle F(u_t)-h,\dot u(t)\rangle| \le & \frac12  \|\dot u(t)\|^2 +c_0\|h\|^2
+c_1M_F^2\|u(t)\|^2 \\ &+ c_2 M_F^2 r\int_0^r ||\dot u(t-\xi)||^2 d\, \xi +
C(M_F), ~~t\ge r. \notag
 \end{align*}
In a similar way  we also have that
\begin{align*}
|\langle F(u_t)-h, u(t)\rangle)| \le c_1 M_F^2  
r  \int^r_0 ||\dot u(t-\xi)||^2 d\,
\xi + C(M_F)(1+ ||u(t)||^2).
\end{align*}
Thus
\begin{multline*}
|\langle F(u_t)-h,\dot u(t)\rangle| +
|\langle F(u_t)-h, u(t)\rangle)| \le
\\
\frac12  \|\dot u(t)\|^2 +
c_0 M_F^2
r  \int^r_0 ||\dot u(t-\xi)||^2 d\,
\xi + c_1(M_F)(1+ ||u(t)||^2).
\end{multline*}
The relations in  \eqref{sdd-2nd-19} and  (\ref{sdd-2nd-20}) with
small enough $\widetilde{\eta}>0$ (and $\eta \in [0,1)$) yield
\[
 c_1(M_F)(1+ ||u||^2)  -||A^{1\over 2}u||^2 -(u,G(u))\le -  a_0\left[||A^{1/2}u||^2 + \Pi_0(u)\right]
 + a_1(M_F)
\]
for  some $a_i>0$ with $a_0$ independent of $M_F$.
Thus it follows from  the relations above that
\begin{align*}
\frac{d}{dt} \widetilde{V}(t) \le & 
-\left(\frac12 -\mu \right) ||\dot u(t)||^2
 \\ &
-  a_0 \left[||A^{1/2}u||^2 + \Pi_0(u)\right] + a_1(M_F)   + \left[
-\frac{\mu}r + a_2 M_F^2 
r\right] \int_0^r ||\dot u(t-\xi)||^2 d\, \xi
\end{align*}
for some $a_i$. Thus using the right inequality in
\eqref{sdd-2nd-23a} we arrive at the relation
\begin{align}\label{sdd-2nd-25b}
\frac{d}{dt} \widetilde{V}(t) +\ga  \widetilde{V}(t)  \le &
-\left( \frac12 -\mu \right) ||\dot u(t)||^2    -
\ ( a_0-\ga c_1)  \left[||A^{1/2}u||^2 + \Pi_0(u)\right]
\notag \\ & + \left[ -\frac{\mu}r +\mu\ga+ a_2 M_F^2 
r \right] \int_0^h
||\dot u(t-\xi)||^2 d\, \xi +a_1(M_F).
\end{align}
Therefore taking $\mu=1/4$ and fixing  $\ga\le a_0c_1^{-1}$  we obtain
that
\begin{align}\label{p-sdd-2nd-25b}
\frac{d}{dt} \widetilde{V}(t) +\ga  \widetilde{V}(t)  +\frac14 ||\dot u(t)||^2 \le C,
~~t\ge r,
\end{align}
 provided  $\ga r + 4a_2M_F^2 r^2\le 1$.
 Thus under the condition
$4a_2m_F^2 r^2< 1$ we can choose $\ga\in (0,a_0c_1^{-1}]$ and $M_F>m_F$
such that \eqref{p-sdd-2nd-25b} holds.
In particular we have that
 \[
 \frac{d}{dt} \widetilde{V}(t) +\ga
\widetilde{V}(t)\le C, ~~t\ge r,
\]
 which implies
\begin{align}\label{sdd-2nd-25d}
 \widetilde{V}(t) \le\widetilde{V}(r) e^{-\ga ( t-r)} +
\frac{C}{\ga}(1-e^{-\ga  (t-r)}), ~~t\ge r,
\end{align}
when $m_F r< \ell_0$. Using \eqref{sdd-2nd-23a} and \eqref{b-sdd9-14fin}
we can conclude that $|\widetilde{V}(r)|\le C_B$
for
all initial data from a bounded set $B$ in $\cCL$.
 Hence (see \eqref{main-eq})
 there exists $R$ such that for
every initial data from a bounded set $B$ in $\cCL$ we have that
\[
\|A^{1/2}u(t)\|+\|A^{-1/2}\dot u(t)\|+\|\dot u(t)+ A u(t)\|\le R  ~~~\mbox{for all}~~t\ge t_B.
\]
Moreover, it follows from \eqref{p-sdd-2nd-25b} that
\[
\int_t^{t+1}\|\dot u(\tau)\|^2 d t \le C_R  ~~~\mbox{for all}~~t\ge
t_B.
\]
To get this one should multiply \eqref{p-sdd-2nd-25b} by
$e^{\gamma t}$, integrate over $[t, t+1]$ and multiply
 by $e^{-\gamma t}$. Then ultimate boundedness of $\widetilde{V}(t)$ (see
 \eqref{sdd-2nd-25d}) and the relation
 $ 1\le e^{\gamma (\tau -t)}$ for $\tau \ge t$ give the last estimate.
\par
These relations  imply \eqref{dis-est-str} and allow us to complete
 the proof of Proposition~\ref{pr:diss}.
\end{proof}

\begin{remark}\label{F0-bounded}
 {\rm If the mapping $F_0$ has sublinear growth in $H$,
 i.e., there exists $\beta<1$ such that
 \[
 \|F_0(u)\|\le c_1+c_2\|u\|^{\beta},~~u\in H,
 \]
  then the linear growth parameter $m_F$ given by \eqref{mF-def} is zero. Thus in this case we
  have no restrictions concerning $r$ in the statement of
  Proposition~\ref{pr:diss}.
  In particular, this is true in the
   case of bounded mappings $F_0$. Moreover, in the latter case
   the argument can be simplified substantially (we can use a Lyapunov type
   function without delay terms).
    For more details  we refer
    to \cite{Rezounenko_NA-2010,Rezounenko-Zagalak-DCDS-2013}.
  }
\end{remark}

We use Proposition~\ref{pr:diss} to obtain the following assertion
which means that the evolution semigroup $S_t$ is (ultimately) compact.

\begin{proposition}[Compact dissipativity]\label{pr:dis-comp}
As in Proposition~\ref{pr:diss} we assume that $m_F r<\ell_0$. Then
  the evolution operator $S_t$ possesses a compact absorbing set.
   More precisely,
  for every $0<\beta\le 1$ and $\al<\min\{\beta,1/2\}$
   the set   $D_{\al,\beta}^R$ given by \eqref{dis-comp}
  is absorbing for some $R$. This set $D_{\al,\beta}^R$ is compact in
  $X$ provided $0<\al<\beta\le1/2$.
\end{proposition}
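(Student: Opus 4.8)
The plan is to produce $D_{\al,\beta}^R$ as an absorbing set by \emph{bootstrapping} the $\al=0$ dissipativity estimate \eqref{dis-est-str} of Proposition~\ref{pr:diss} through the parabolic smoothing of the analytic semigroup $\ee^{-At}$, and then to prove compactness of $D_{\al,\beta}^R$ in $X$ by Arzel\`a--Ascoli together with the compact embeddings $H_a\hookrightarrow\hookrightarrow H_b$, $a>b$, which hold because $A$ has discrete spectrum (Assumption~\ref{as:basic}(A)). First I would fix a bounded set $B\subset\cCL$ and take $t_B,R_*$ as in Proposition~\ref{pr:diss}. Using \eqref{lin-b-F}, \eqref{G-lip} and $\|u(s)\|\le\la_1^{-1/2}\|A^{1/2}u(s)\|$, the source $f(t):=h-F(u_t)-G(u(t))$ obeys $\|f(t)\|\le C(R_*)$ for $t\ge t_B+r$, so $f\in L^\infty([t_B+r,\infty);H)$. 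Since $S_t\cCL\subset X$ for $t\ge r$ by Proposition~\ref{pr:closed-ev}(a), the images already lie in $X$, consistent with $D_{\al,\beta}^R\subset X$.

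Next, writing for $t_0=t_B+r$ the variation-of-constants formula $u(t)=\ee^{-A(t-t_0)}u(t_0)+\int_{t_0}^t\ee^{-A(t-s)}f(s)\,\dd s$ and invoking the standard bounds $\|A^\sigma\ee^{-A\tau}\|\le C\tau^{-\sigma}\ee^{-\la_1\tau/2}$ ($\sigma\ge0$) and $\|(\ee^{-A\tau}-I)A^{-\sigma}\|\le C\tau^{\sigma}$ ($0\le\sigma\le1$), I would use integrability of $\tau^{-\sigma}$ for $\sigma<1$ to get $\|A^\sigma u(t)\|\le C(R_*)$ for \emph{every} $\sigma<1$ and all $t\ge t_0+1$. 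In particular $|A^{1-\beta}u_t|_C\le C$, and since $A^{-\beta}\dot u=A^{-\beta}f-A^{1-\beta}u$ with $A^{-\beta}$ bounded, also $|A^{-\beta}\dot u_t|_C\le C$ for $t$ large. The $L^2$ term in \eqref{dis-comp} follows directly from \eqref{dis-est-str}.

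The time--H\"older bounds are the heart of the matter. Splitting $A^{1-\beta}u(t_2)-A^{1-\beta}u(t_1)=(\ee^{-A(t_2-t_1)}-I)A^{1-\beta}u(t_1)+\int_{t_1}^{t_2}A^{1-\beta}\ee^{-A(t_2-s)}f(s)\,\dd s$, the integral is $O(|t_2-t_1|^\beta)$ (as $\int_0^\tau\xi^{-(1-\beta)}\dd\xi=\beta^{-1}\tau^\beta$) and the first term is $O(|t_2-t_1|^\al)$ via $\|(\ee^{-A\tau}-I)v\|\le C\tau^\al\|A^\al v\|$ and boundedness of $A^{1-\beta+\al}u$ (valid since $\al<\beta$ makes $1-\beta+\al<1$); hence ${\rm Hold}_\al(A^{1-\beta}u_t)\le C$. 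The main obstacle is ${\rm Hold}_\al(A^{-\beta}\dot u_t)$, which through $A^{-\beta}\dot u=A^{-\beta}f-A^{1-\beta}u$ reduces to the H\"older continuity of $F(u_t)$ and $G(u(t))$. The same representation shows $u$ is $\sigma$-H\"older in $H$ for every $\sigma<1$ (the inhomogeneous part is Lipschitz in $t$, while $\|(\ee^{-A\tau}-I)u(t_1)\|\le C\tau^\sigma$). Then $t\mapsto\eta(u_t)$ is H\"older by \eqref{eta-lip}, the argument $t-\eta(u_t)$ is H\"older, and since $F_0$ is globally Lipschitz on $H$ (\eqref{F-lip} with $\al=0$), the composition $F(u_t)=F_0(u(t-\eta(u_t)))$ is $\gamma$-H\"older in $H$ for every $\gamma<1$; likewise $A^{1/2}u$ is $\al$-H\"older ($\al<1/2$), so $G(u(t))$ is $\al$-H\"older by \eqref{G-lip}. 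This yields ${\rm Hold}_\al(A^{-\beta}\dot u_t)\le C$, and taking $R$ as the resulting constant gives $S_tB\subset D_{\al,\beta}^R$ for $t$ large, i.e.\ $D_{\al,\beta}^R$ is absorbing.

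For compactness in $X$ with $0<\al<\beta\le\hf$, take $\{\va_n\}\subset D_{\al,\beta}^R$. The family is bounded in $H_{1-\beta}$ uniformly in $\theta$ and $\al$-H\"older in $H$; since $1-\beta\ge\hf>0$, the compact embedding $H_{1-\beta}\hookrightarrow\hookrightarrow H$ with Arzel\`a--Ascoli yields a subsequence with $\va_n\to\va$ in $C([-r,0];H)$, controlling $\max_{[-r,0]}\|\va_n-\va\|$ in \eqref{x-norm}. For the $\dot\va$ part, $A^{-\beta}\dot\va_n$ is bounded and $\al$-H\"older in $H$ and, since $A^\beta(A^{-\beta}\dot\va_n)=\dot\va_n$ is bounded in $L^2(-r,0;H)$ by the integral term of \eqref{dis-comp}, $A^{-\beta}\dot\va_n(\theta)$ lies in a bounded subset of $H_\beta$ for a.e.\ $\theta$; the compact embedding $H_\beta\hookrightarrow\hookrightarrow H$ ($\beta>0$) with equicontinuity gives convergence of $A^{-\beta}\dot\va_n$ in $C([-r,0];H)$, whence $A^{-1/2}\dot\va_n=A^{-(1/2-\beta)}A^{-\beta}\dot\va_n$ converges in $C([-r,0];H)$. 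For $\|A^{1/2}(\va_n(0)-\va(0))\|$: when $\beta<\hf$ the bound on $\{\va_n(0)\}$ in $H_{1-\beta}$ and $H_{1-\beta}\hookrightarrow\hookrightarrow H_{1/2}$ give convergence in $H_{1/2}$; the borderline $\beta=\hf$ is the other delicate point, which I would treat by evaluating the compatibility relation of \eqref{sdd10-36} at $\theta=0$, namely $A^{1/2}\va_n(0)=-A^{-1/2}\dot\va_n(0)-A^{-1/2}F(\va_n)-A^{-1/2}G(\va_n(0))$, and passing to the limit using the already established convergences together with Assumption~\ref{as:basic}\textbf{(Gb)} for the last term. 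Closedness of $D_{\al,\beta}^R$ follows from lower semicontinuity of the defining seminorms, so $D_{\al,\beta}^R$ is compact in $X$. I expect the H\"older regularity of the state-dependent delay term and the endpoint $\beta=\hf$ to be the principal difficulties; note that for the mere existence of a compact absorbing set any $\beta\in(0,\hf)$ already suffices.
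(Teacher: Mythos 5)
Your route to the absorbing property is essentially the paper's: the mild formula with the bounded source $f=h-F(u_t)-G(u(t))$, the smoothing bounds $\|A^\sigma\ee^{-A\tau}\|\le C\tau^{-\sigma}$ and $\|(\ee^{-A\tau}-I)A^{-\sigma}\|\le C\tau^{\sigma}$, the same splitting of $A^{1-\beta}(u(t_1)-u(t_2))$ into a semigroup-difference term and a Duhamel integral, and finally the equation itself to transfer the bounds to $A^{-\beta}\dot u$. The only real divergence is the H\"older-in-time estimate for the state-dependent delay term: the paper bounds $\|F(u_{t_1})-F(u_{t_2})\|$ by $L_F\bigl|\int_{t_1-\eta(u_{t_1})}^{t_2-\eta(u_{t_2})}\|\dot u(\xi)\|\,d\xi\bigr|$ and applies Cauchy--Schwarz against the $L^2$ bound on $\dot u$ from \eqref{dis-est-str}, whereas you compose the H\"older continuity of $t\mapsto u(t)$ in $H$ (any exponent below one) with the H\"older continuity of $t\mapsto t-\eta(u_t)$ and the Lipschitz property \eqref{F-lip} of $F_0$ on $H$. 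Both are valid; yours is self-contained and even yields a better exponent.

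The one step that does not survive scrutiny is in the compactness argument: from $\int_{-r}^0\|\dot\va_n(\theta)\|^2\,d\theta\le R^2$ you conclude that $A^{-\beta}\dot\va_n(\theta)$ lies in a bounded subset of $H_\beta$ for a.e.\ $\theta$. An $L^2$-in-time bound gives no uniform-in-$n$ pointwise bound on $\|\dot\va_n(\theta)\|$, so this implication fails as written. What you actually need is only pointwise precompactness in $H$ of $\{A^{-1/2}\dot\va_n(\theta)\}_n$, since the metric \eqref{x-norm} involves $A^{-1/2}\dot\va$ alone. For $\beta<1/2$ this is immediate without the $L^2$ term: $A^{-1/2}\dot\va_n(\theta)=A^{-(1/2-\beta)}\bigl(A^{-\beta}\dot\va_n(\theta)\bigr)$ is the image of an $H$-bounded set under the compact operator $A^{-(1/2-\beta)}$, and the equi-H\"older bound on $A^{-\beta}\dot\va_n$ supplies equicontinuity, so Arzel\`a--Ascoli applies directly. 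For the endpoint $\beta=1/2$ (which the proposition does claim) your idea can be repaired by averaging: $A^{-1/2}\dot\va_n(\theta)=h^{-1}\int_\theta^{\theta+h}A^{-1/2}\dot\va_n(s)\,ds+O(h^{\al})$ uniformly in $n$ by the equi-H\"older bound, and the averages lie in $A^{-1/2}\{v\in H:\|v\|\le h^{-1/2}R\}$ (Cauchy--Schwarz plus the $L^2$ term of \eqref{dis-comp}), a precompact subset of $H$; hence $\{A^{-1/2}\dot\va_n(\theta)\}_n$ is totally bounded. The remainder of your compactness discussion (the $H_{1-\beta}\hookrightarrow\hookrightarrow H$ step for $\va_n$, the use of the compatibility condition of \eqref{sdd10-36} for $\|A^{1/2}\va_n(0)\|$ at the endpoint, closedness by lower semicontinuity) is sound; the paper itself disposes of all of this with a one-line appeal to Arzel\`a--Ascoli.
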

\begin{proof}
We first note that the
compactness of  $D_{\al,\beta}^R$ in $X\subset\cCL$ for
$0<\al<\beta\le1/2$
follows from Arzel\`{a}-Ascoli theorem in Banach spaces (see, e.g.,
\cite{Simon-AMPA-1987}).
\par
Now we show that $D_{\al,\beta}^R$ is absorbing.
\par
  Using the mild form of the problem and
  also the bound in \eqref{lin-b-F}  one can also show that
\begin{equation}\label{dis+delta}
 \|A^{1-\delta}u(t)\|+\|A^{-\delta}\dot u(t)\|\le C_R(\delta)
 ~~~\mbox{for all}~~t\ge t_B,
\end{equation}
for every $\delta>0$, where $u(t)$ is a solution possessing property
\eqref{dis-est-str}.
\par
Now we consider  the difference $u(t_1)-u(t_2)$
with $t_1>t_2$.
Namely, using the mild form we obtain
\begin{align*}
    ||A^{1-\beta}( u(t_1)-u(t_2))|| \le & ||A^{1-\beta} (e^{-A(t_1-t_2)}-1)u(t_2) ||
    \\ &
     + \int^{t_1}_{t_2} ||A^{1-\beta} e^{-A(t-\tau)}||\cdot
     (||F(u_\tau)||+ \|G(u(\tau))-h\|)\, d\tau.
\end{align*}
Since (see \cite[Theorem 1.4.3, p.26]{Henry_Springer-1981_book} for related
facts) 
\[
||A^{-\alpha}(1- e^{-At})|| \le t^\alpha~~\mbox{and} ~~||A^{\alpha}
e^{-At}|| \le \left(\frac{\alpha}{t}\right)^\alpha  e^{-\alpha}
\]
for all $t>0$ and $0\le\al\le1$, we obtain
\begin{align*}
    ||A^{1-\beta}( u(t_1)-u(t_2))|| \le & |t_1-t_2|^\al ||A^{1-\beta+\al}u(t_2) ||
    \\ &
     + c_\beta\int^{t_1}_{t_2} \frac1{|t_1-\tau|^{1-\beta}}\big[ C_{R_*}+c |u_\tau|_C\big]
     \, d\tau.
\end{align*}
for $t\ge t_B$. Thus for every $0<\al<\beta\le 1$ we have
\begin{equation}\label{holder1}
    ||A^{1-\beta}( u(t_1)-u(t_2))|| \le C_{R_*}|t_1-t_2|^\al~~\mbox{for all}~~ t_i\ge t_B,
    |t_1-t_2|\le 1.
\end{equation}
Similarly to \eqref{F-lip-l2} using \eqref{holder1} with $\beta=1$
and $\al =1/2$ we have that
\begin{align*}
\| F(u_{t_1})-F(u_{t_2})\| & \le L_F \left| \int_{t_1-\eta(u_{t_1})}^{t_2-\eta(u_{t_2})}
\|\dot u(\xi)\|d\xi \right|\
\\
&\le C_{R_*}\left[ \left|t_1-t_2\right|+
  | u_{t_1}-u_{t_2}|_C^2\right]^{1/2} \le  C_{R_*}
\left|t_1-t_2\right|^{1/2}
\end{align*}
 for every $t_1,t_2\ge t_B\ge r$.
Thus from \eqref{main-eq} and \eqref{holder1} we obtain
\begin{equation*}
    ||A^{-\beta}( \dot u(t_1)-\dot u(t_2))||
    \le C_{R_*}|t_1-t_2|^{\al}~~\mbox{for all}~~ t_i\ge t_B,~~
    |t_1-t_2|\le 1,
\end{equation*}
for every $0<\al<1/2$.
This implies  that  the set   $D_{\al,\beta}^R$ given by \eqref{dis-comp}
 is absorbing for some $R$ provided
$0<\beta\le 1$ and $\al<\min\{\beta,1/2\}$.
\end{proof}

Proposition \ref{pr:dis-comp} allow us to apply the result
from \cite{PataZelik-cls} (see Theorem~\ref{th:cl-sem-attract}
in the Appendix) to guarantee the existence of a compact connected
global attractor.

\subsection{Dimension and exponential attractor}

The proof of finite-dimensionality is
based on the notion of {\it quasi-stability} which says that
the semigroup is asymptotically contracted   up to a homogeneous compact additive term.
 For the convenience we remind the  corresponding abstract result in the
Appendix.
\par
We can assume that there exists a \emph{ forward invariant} closed
absorbing set $D_0$ which belongs to $D_{\al,\beta}^R$ for an
appropriate choice of the parameters
 (see Proposition \ref{pr:dis-comp}).
 We also note that the restriction
of $S_t$ on $D_0$ is \textit{continuous} in both $t$ and initial data
 in the topology induced by ${\cCL }$
 (see
\eqref{sdd9-9}).
Thus
a dynamical system $(S_t, D_0)$ in the classical
(see \cite{Babin-Vishik,Chueshov_Acta-1999_book,Ha88,Temam_book}) sense arises.
Therefore we can apply the quasi-stability method developed earlier
in
\cite{Chueshov-2014_book,chlJDE04,Chueshov-Lasiecka-MemAMS-2008_book,Chueshov-Lasiecka-2010_book,CL-hcdte-notes}
 for continuous  evolution models.

\begin{proposition}[Quasi-stability]\label{pr:qs-main}
   Let Assumptions~\ref{as:basic}  and \ref{as:G2} be in force.
   Assume that \eqref{F0-fd} and \eqref{G-fd} are valid.
   Then
\begin{align}\label{qs-eq}
|S_t\va^1- S_t\va^2|_\cCL \le & C_R e^{-\lambda_1 t}\left[
||\va^1(0)- \va^2(0)||_{1/2}+
 |\va^1- \va^2|_{C}\right] \notag\\ &
 +C_{R} \max_{s\in [0,t]} ||A^{1/2-\beta}(u^1(s)-u^2(s))||,~~ t\ge r,
\end{align}
for every $\va^i\in D_0$,  where $u^i(t)=(S_t\va^i)(\theta)\big|_{\theta=0}$.
\end{proposition}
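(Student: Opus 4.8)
The plan is to work with the difference $z=u^1-u^2$, which by \eqref{main-eq-dif} solves the linear nonautonomous parabolic problem $\dot z+Az=f$ with $f(t)=F(u^2_t)-F(u^1_t)+G(u^2(t))-G(u^1(t))$, and to exploit the smoothing of the analytic semigroup $e^{-At}$ together with the extra regularity carried by the forward invariant absorbing set $D_0\subset D_{\al,\beta}^R$. Throughout I fix $\beta\in(0,\min\{\ga,\hf\}]$ and $\al\in(0,\beta)$, so that every trajectory entering $D_0$ obeys the uniform bounds $\|A^{1-\beta}u^i(s)\|\le R$ and $\|A^{-\beta}\dot u^i(s)\|\le R$ (recorded in \eqref{dis-comp}) on the relevant time range. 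Writing $N_t\equiv\max_{s\in[0,t]}\|A^{\hf-\beta}(u^1(s)-u^2(s))\|$ for the compact seminorm that should appear on the right, the task reduces to bounding the three ingredients of $|z_t|_\cCL$ from \eqref{sdd9-9} — namely $\max_{s\in[t-r,t]}\|z(s)\|$, ${\rm ess\,sup}_{s\in[t-r,t]}\|A^{-\hf}\dot z(s)\|$ and $\|A^{\hf}z(t)\|$ — by $C_Re^{-\la_1 t}\big[\|A^{\hf}z(0)\|+|\va^1-\va^2|_C\big]+C_RN_t$.

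The heart of the argument is a \emph{Lipschitz}-in-the-compact-seminorm bound on the forcing $f$ in the smoother space $H_{-\hf+\delta}$. For the delay term I split $F(u^2_\tau)-F(u^1_\tau)$ into a same-trajectory/different-shift piece $F_0(u^2(a_2))-F_0(u^2(a_1))$ and a different-trajectory/same-shift piece $F_0(u^2(a_1))-F_0(u^1(a_1))$, where $a_i=\tau-\eta(u^i_\tau)$. By \eqref{F0-fd} both are estimated in $H_{-\hf+\delta}$ through the weak norm $H_{-\ga}$: the second gives $c\|z(a_1)\|_{-\ga}\le c\|z(a_1)\|$, while for the first I use $|a_2-a_1|\le L_\eta|z_\tau|_C$ from \eqref{eta-lip} together with the uniform bound $\|A^{-\beta}\dot u^2\|\le R$ to write $\|u^2(a_2)-u^2(a_1)\|_{-\ga}\le\|u^2(a_2)-u^2(a_1)\|_{-\beta}\le R|a_2-a_1|$, which is genuinely \emph{first order} in $|z_\tau|_C$. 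This is the step where the state dependence of the delay is tamed, and it is the main obstacle: the naive bound \eqref{F-lip-l2} is only $\hf$-H\"{o}lder, and it is precisely the uniform $H_{-\beta}$-Lipschitz-in-time regularity available on $D_0$ that upgrades it to a Lipschitz estimate suitable for quasi-stability. Combining this with \eqref{G-fd} for the non-delayed term and using $\beta\le\ga$ so that $\|\cdot\|_{\hf-\ga}\le\|\cdot\|_{\hf-\beta}$, I obtain for $\tau\ge r$ the clean bound $\|f(\tau)\|_{-\hf+\delta}\le C_R\max_{s\in[\tau-r,\tau]}\|z(s)\|_{\hf-\beta}\le C_RN_t$, whereas for $\tau\in[0,r]$ the terms that reach into the initial interval $[-r,0]$ only ever invoke $\|z(s)\|$ with $s<0$, which is bounded by $|\va^1-\va^2|_C$, so that $\|f(\tau)\|_{-\hf+\delta}\le C_R\big[|\va^1-\va^2|_C+N_t\big]$.

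With this in hand I pass to the variation-of-constants formula
\[
A^{\hf}z(t)=A^{\hf}e^{-At}z(0)+\int_0^t A^{1-\delta}e^{-A(t-\tau)}\,A^{-\hf+\delta}f(\tau)\,d\tau ,
\]
noting that $A^{1-\delta}A^{-\hf+\delta}=A^{\hf}$ and $\|A^{-\hf+\delta}f\|=\|f\|_{-\hf+\delta}$. The homogeneous part is bounded by $e^{-\la_1 t}\|A^{\hf}z(0)\|$. For the integral I use $\|A^{1-\delta}e^{-As}\|\le C\,s^{-(1-\delta)}e^{-\la_1 s/2}$ (integrable singularity since $\delta>0$) near the diagonal and the exact decay $\|A^{1-\delta}e^{-As}\|=\la_1^{1-\delta}e^{-\la_1 s}$ for $s$ large: the $N_t$-contribution is then $\le C_RN_t\int_0^\infty\sigma^{-(1-\delta)}e^{-\la_1\sigma/2}\,d\sigma=C_R'N_t$, while the $\tau\in[0,r]$ contribution carrying $|\va^1-\va^2|_C$ has kernel evaluated at $t-\tau\ge t-r$ and is thus $\le C_Re^{-\la_1 t}|\va^1-\va^2|_C$ for $t$ large. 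This gives $\|A^{\hf}z(t)\|\le C_Re^{-\la_1 t}\big[\|A^{\hf}z(0)\|+|\va^1-\va^2|_C\big]+C_RN_t$. The two remaining ingredients follow at once: $\max_{s\in[t-r,t]}\|z(s)\|\le\la_1^{-\hf}\max_{s\in[t-r,t]}\|A^{\hf}z(s)\|$ by the Poincar\'{e} inequality and the energy bound applied at each $s$, and ${\rm ess\,sup}_{s\in[t-r,t]}\|A^{-\hf}\dot z(s)\|\le\max_s\|A^{\hf}z(s)\|+\max_s\|f(s)\|_{-\hf+\delta}$ directly from $A^{-\hf}\dot z=-A^{\hf}z+A^{-\hf}f$, both already controlled for $t\ge 2r$ (where the window $[t-r,t]$ no longer meets $[0,r]$). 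Summing the three bounds yields \eqref{qs-eq} for $t\ge 2r$; on the bounded range $r\le t\le 2r$ the inequality holds after enlarging $C_R$, since there the left-hand side is controlled by the H\"{o}lder estimate \eqref{b-sdd9-14dif} and \eqref{lip-weak} while $e^{-\la_1 t}$ stays bounded below.
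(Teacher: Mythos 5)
Your proof is correct and follows essentially the same route as the paper: the variation-of-constants formula with the smoothing estimate $\|A^{1-\delta}e^{-As}\|\le C s^{-(1-\delta)}e^{-\la_1 s/2}$, combined with the key observation that the split of $F(u^2_\tau)-F(u^1_\tau)$ into same-trajectory/different-shift and different-trajectory/same-shift pieces, together with the uniform bound $\|A^{-\beta}\dot u^2\|\le R$ on $D_0$ and \eqref{F0-fd}--\eqref{G-fd}, upgrades the $\hf$-H\"older estimate \eqref{F-lip-l2} to a genuine Lipschitz bound on the forcing in $H_{-\hf+\delta}$. You are in fact somewhat more explicit than the paper about controlling all three components of the $\cCL$-norm and about the intermediate time range $t\in[r,2r]$.
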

\begin{proof}
  Using the mild form presentation for $u^i(t)$
 and \eqref{G-fd}
we have that
  \begin{multline*}
 ||A^{1/2}( u^1(t)-u^2(t))||\le  e^{-\lambda_1 t}||A^{1/2}(u^1(0)- u^2(0))||
\\  + \int^t_0 ||A^{1-\delta} e^{-A(t-\tau)}||\cdot
\left( C || A^{-1/2+\delta}\big[ F(u^1_\tau)-F(u^2_\tau)\big] ||
+C_R
 || u^1(t)-u^2(t)||_{1/2-\gamma}\right)
\, d\tau.
\end{multline*}
As in \eqref{F-lip-l2}
we also have that
 \begin{align*}
\|  A^{-1/2+\delta}\big[F(u^2_t)-F(u^1_t)\big]\| & \le C \left|
\int_{t-\eta(u^1_t)}^{t-\eta(u^2_t)} \|A^{-\beta}\dot
u_2(\xi)\|d\xi\right| +  C| u^2_t- u^1_t|_C
\\ &  \le  C(R)
\max_{\theta\in [-r,0]} \| u^2(t+\theta)- u^1(t+\theta)\|
 \end{align*}
 for every $t\ge 0$.
Therefore
 \begin{multline*}
 ||A^{1/2}( u^1(t)-u^2(t))||\le  c_1e^{-\lambda_1 t}
 \left[
 ||A^{1/2}(\va^1(0)- \va^2(0))|| + |\va^1- \va^2|_{C}\right]
\\  + c_2(R)  \max_{s\in [0,t]} ||A^{1/2-\beta}(u^1(s)-u^2(s))||,~
\end{multline*}
Using \eqref{main-eq}, \eqref{G-lip} and \eqref{sdd17-F1} we also have that
\[
||A^{-1/2}( \dot u^1(t)-\dot u^2(t))||\le C(R) \left[||A^{1/2}( u^1(t)-u^2(t))||+
| u^2_t- u^1_t|_C\right]
\]
Thus
 \begin{multline*}
||A^{-1/2}( \dot u^1(t)-\dot u^2(t))|| \le  c_1e^{-\lambda_1 t}
 \left[
 ||A^{1/2}(\va^1(0)- \va^2(0))|| + |\va^1- \va^2|_{C}\right]
\\  + c_2(R)  \max_{s\in [0,t]} ||A^{1/2-\gamma}(u^1(s)-u^2(s))||,~~
t\ge r.
\end{multline*}
This completes the proof of Proposition~\ref{pr:qs-main}.
\end{proof}
In order to prove the finite dimensionality of the attractor $\fA$
we apply Theorem~\ref{th:qs-abstract} on the attractor with an appropriate choice
of operators and spaces.
Indeed,
let  $T>0$ be chosen such that  $q\equiv C_R e^{-\lambda_1 T}<1$
where $C_R$ is the constant from \eqref{qs-eq}.  We define the Lipschitz mapping $$
K: D_0 \mapsto Z_{[0,T]}\equiv
C^1([0,T];D(A^{-1/2}))\cap C([0,T];D(A^{1/2}))
$$  by the rule
$K\varphi = u(t), t\in [0,T]$, with $u$ be the unique solution of
(\ref{main-eq}) and (\ref{sdd9-ic}) with initial function $\varphi\in
D_0$.
The seminorm $n_Z (u)\equiv \max_{s\in [0,T]} ||A^{1/2-\beta}u(s)||$
is compact on $Z_{[0,T]}$ due to the compact imbedding of
$Z_{[0,T]}$  into $C([0,T];D(A^{\alpha}))$
by the Arzel\`{a}-Ascoli theorem (see, e.g.,\cite{Simon-AMPA-1987}).
\par
If we take
\begin{equation*}
Y \equiv \left\{ \varphi \in C^1([-r,0];H_{-1/2})\cap C([-r,0];H) \left|
 \varphi (0)\in H_{1/2} \right.\right\}
\end{equation*}
equipped with the norm \eqref{x-norm} and suppose $V=S_T$, then the
(discrete) quasi-stability inequality in \eqref{7.3.3h-g} is valid
on $D_0$. Hence we can apply Theorem 3.1.20
\cite{Chueshov-2014_book} (see Theorem~\ref{th:qs-abstract}) with
$V=S_T$,  $M= \fA$ and the quasi-stability estimate (\ref{qs-eq}) on
the attractor $\fA$ which lies in $D_0$. Thus ${\rm dim}_f\,\fA $ is
finite (in $X$ and thus in $\cCL$).
\medskip\par

To prove  the existence of a fractal exponential attractor
 we first use \eqref{7.3.3h-g} on the set $D_0$ and then apply  Theorem \ref{t7.3.2a-gen} to show that there exists a finite-dimensional set
 $A_\theta\subset D_0$ such that \eqref{7.3.3g}   holds.
Then as in the standard construction (see, e.g.,
\cite{EFNT94}  or \cite{MirZel-08}) we suppose
\[
\fA_{exp}=\overline{ \cup \{ S_t A_\theta\, : t\in [0,T]\}}.
\]
Since $V=S_T$ it is easy to see that $\fA_{exp}$ is exponentially
attracting, see \eqref{7.3.4} in the Appendix.

Since $D_0$ is included in the set $D^R_{\al,\beta}$ given by
\eqref{dis-comp}, we have that
$t\mapsto S_t\va$ is $\al$-H\"{o}lder on $D_0$ and
\[
|S_{t_1} \va-S_{t_2}\va|_{Y}\le C_{D_0} |t_1-t_2|^\al,
~~t_1,t_2\in [0,T],~~\va \in D_0.
\]
Therefore in the standard way
(see, e.g.,
\cite{EFNT94}  or \cite{MirZel-08})
we can conclude that $\fA_{exp}$ has  finite fractal dimension in $Y$.

This completes the proof of Theorem~\ref{th:attr}.

\medskip

{\bf Acknowledgements.} This work was supported in part by GA
 CR under project P103/12/2431.

\appendix
\section{Appendix}

Here, for the convenience of the reader, we remind some results used
in our work. For more details we refer to the cited sources.
\par
First
we collect some definitions and  properties, connected to (closed) evolution semigroups. We start with the following notion which was introduced in \cite{PataZelik-cls}.

\begin{definition}[Closed semigroup]
\label{Def-cl-semigr}  Let ${\cal
X}$ be a complete metric space. A {\tt closed semigroup} on ${\cal
X}$ is a one-parameter family of (nonlinear) operators $S_t : {\cal
X} \to {\cal X}\, (t\in \R_+)$ (or $t\in \Nset$) satisfying the conditions
\begin{itemize}
    \item[\textbf{(S.1)}] $S_0=Id_{\cal X}$-identical operotor;
    \item[\textbf{(S.2)}] $S_{t+\tau} = S_tS_\tau$ for all $t,
    \tau \in \R_+$;
    \item[\textbf{(S.3)}] for every $t\in\R_+$ the relations
     $x_n\to x$ and $S_tx_n \to y$ imply that
    $S_tx=y$.
\end{itemize}
\end{definition}
Assumptions (S.1) and (S.2) are the semigroup properties, while (S.3)
says that $S_t$ is a closed (nonlinear) map. We note  the operator closeness is a well-known concept in the theory of linear (unbounded) operators.
To our best knowledge
in the context of evolution operators this notion
was appeared in \cite{Babin-Vishik}   as a (weak) closeness of an evolution
(strongly continuous) semigroup
(see also \cite{Chueshov_Acta-1999_book}).

\par
The following notions are standard in the theory of infinite-dimensional evolution semigroups and dynamical systems
(see, e.g., \cite{Babin-Vishik,Chueshov_Acta-1999_book,Ha88,Ladyzhenskaya-1991-book,Temam_book}).

\begin{definition}[Dissipativity and compactness]
\label{Def-diss}
A semigroup $S_t$ is {\tt dissipative} if there is a bounded
absorbing set ${\cal B}_{abs}\subset {\cal X}$. That means for any
bounded set $B\subset {\cal X}$, there exists  $t_0=t_0(B)$ (the
entering time) such that $S_tB\subset {\cal B}_{abs}$ for all $t\ge
t_0$.   A semigroup $S_t$ is {\tt compact} if there is a compact
absorbing set.
\end{definition}

\begin{definition}[Global attractor]
\label{Def-attr2}
 A
{\tt global attractor}  of an evolution semigroup  $S_t$ acting
on a complete metric space ${\cal X}$ is defined as a bounded closed
set $\fA\subset {\cal X}$ which is  invariant ($S_t\fA=\fA$ for all
$t>0$)
and 
attracting.
 \end{definition}
 We recall
(\cite{Babin-Vishik,Temam_book}) that a
 set ${\cal K}\subset {\cal X}$ is called {\tt attracting} for
$S(t)$ if, for any bounded set $B\subset {\cal X}$,
$$ \lim_{t\to
+\infty}  d_{\cal X} \{ S(t)B, {\cal K}\} =0,$$ where $d_{\cal X} \{
A, B\} \equiv \sup_{x\in A} \, dist_{\cal X} (x,B)$ is the Hausdorff
semi-distance between bounded sets $A,B\subset~{\cal X}$.

%
%
%
%

The following assertion is a reformulation of Corollary 6 \cite{PataZelik-cls} which also takes into account the statement of \cite[Theorem 2]{PataZelik-cls}).
\begin{theorem}[Existence of a global attractor]\label{th:cl-sem-attract}
  Assume that $S_t : {\cal
X} \to {\cal X}$ is  a closed semigroup possessing a compact connected
 absorbing set ${\cal K}_{abs}\subset {\cal X}$.
Then there exists a compact global attractor $\fA$
for $S_t$. This attractor is a connected set and
  $\fA=\omega({\cal K}_{abs}) = \bigcap_{t \in R}
\overline{\bigcup_{\tau\ge t} S_\tau  {\cal K}_{abs}} $.
\end{theorem}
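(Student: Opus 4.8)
The plan is to take $\fA := \omega({\cal K}_{abs}) = \bigcap_{s\ge 0}\overline{\bigcup_{\tau\ge s}S_\tau {\cal K}_{abs}}$ and verify directly that it is a compact invariant attracting set, relying on compactness of ${\cal K}_{abs}$ and on the closedness property (S.3) in place of continuity. First I would record that, since ${\cal K}_{abs}$ is bounded and absorbing, there is a self-absorption time $t_*$ with $S_t{\cal K}_{abs}\subseteq {\cal K}_{abs}$ for all $t\ge t_*$. Consequently, for $s\ge t_*$ the sets $K_s:=\overline{\bigcup_{\tau\ge s}S_\tau {\cal K}_{abs}}$ form a decreasing family of nonempty closed subsets of the compact set ${\cal K}_{abs}$, so by the finite intersection property their intersection $\fA$ is nonempty and compact. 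I would also record the sequential description used repeatedly below: $y\in\fA$ if and only if there exist $\tau_n\to\infty$ and $\zeta_n\in{\cal K}_{abs}$ with $S_{\tau_n}\zeta_n\to y$.

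Next I would prove invariance $S_t\fA=\fA$ using only (S.3). For $S_t\fA\subseteq\fA$, take $y=\lim S_{\tau_n}\zeta_n\in\fA$; for large $n$ one has $a_n:=S_{\tau_n}\zeta_n\in{\cal K}_{abs}$ and $a_n\to y$, while $S_t a_n=S_{t+\tau_n}\zeta_n$ again lies in ${\cal K}_{abs}$, so a subsequence converges to some $w\in{\cal K}_{abs}$. Applying (S.3) to $a_n\to y$ and $S_t a_n\to w$ gives $S_t y=w$, and since $w=\lim S_{t+\tau_n}\zeta_n$ with $t+\tau_n\to\infty$ we get $w\in\fA$. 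For the reverse inclusion $\fA\subseteq S_t\fA$, given $y=\lim S_{\tau_n}\zeta_n\in\fA$ I would set $w_n:=S_{\tau_n-t}\zeta_n\in{\cal K}_{abs}$ (for large $n$), extract $w_n\to w$ with $w\in\fA$ (since $\tau_n-t\to\infty$ and $\zeta_n\in{\cal K}_{abs}$), and use $S_t w_n=S_{\tau_n}\zeta_n\to y$ together with (S.3) to conclude $S_t w=y$, i.e. $y\in S_t\fA$. Neither inclusion uses continuity of $S_t$.

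The attraction property I would establish by contradiction. If some bounded $B$ is not attracted, there are $\eps>0$, $t_n\to\infty$ and $x_n\in B$ with $dist_{\cal X}(S_{t_n}x_n,\fA)\ge\eps$. Letting $t_0$ be the entering time of $B$, the points $\zeta_n:=S_{t_0}x_n$ lie in ${\cal K}_{abs}$ for large $n$, and $S_{t_n}x_n=S_{t_n-t_0}\zeta_n$. Since $S_{t_n}x_n\in{\cal K}_{abs}$ eventually, compactness yields a subsequence $S_{t_n}x_n\to y$; then $y=\lim S_{t_n-t_0}\zeta_n$ with $t_n-t_0\to\infty$ and $\zeta_n\in{\cal K}_{abs}$, so $y\in\fA$ by the sequential description, contradicting $dist_{\cal X}(y,\fA)\ge\eps$. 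Hence $\fA$ attracts every bounded set, and together with compactness and invariance this makes $\fA$ a global attractor, the stated identity $\fA=\omega({\cal K}_{abs})$ holding by construction.

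The main obstacle is connectedness, which — unlike the previous steps — does not follow from closedness alone, since a closed map need not send connected sets to connected sets. Here I would argue by contradiction in the standard way: if $\fA$ were disconnected, cover it by two disjoint open sets $U_1,U_2$, use attraction of ${\cal K}_{abs}$ to force $S_t{\cal K}_{abs}\subset U_1\cup U_2$ for all large $t$, and then exploit connectedness of ${\cal K}_{abs}$ to conclude that each such $S_t{\cal K}_{abs}$ lies in a single $U_i$, which would place $\fA$ inside one $U_i$ — a contradiction. The delicate point is exactly that $S_t{\cal K}_{abs}$ is connected, i.e. that $S_t$ preserves connectedness; this is where continuity of the evolution must enter (as in Theorem~2 of \cite{PataZelik-cls}), and in the present application it is available because $S_t$ is $\hf$-H\"older continuous for $t>r$ by Proposition~\ref{pr:closed-ev}. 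Equivalently, once each $K_s$ is known to be connected, $\fA=\bigcap_{s\ge t_*}K_s$ is connected as a decreasing intersection of compact connected sets.
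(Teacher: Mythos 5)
The paper does not actually prove this theorem: it is stated as a reformulation of Corollary 6 and Theorem 2 of \cite{PataZelik-cls}, so your argument should be measured against that source rather than against an in-paper proof. Your treatment of existence, compactness, invariance and attraction is correct and is essentially the Pata--Zelik argument: the self-absorption time $t_*$, the sequential description of $\omega({\cal K}_{abs})$, the two inclusions of invariance via property (S.3) applied to sequences that stay in the compact set ${\cal K}_{abs}$, and the contradiction argument for attraction are all sound and use nothing beyond closedness plus compactness of the absorbing set.

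The one place where your proof of the \emph{abstract} statement is incomplete is connectedness. You correctly identify that the argument needs $S_t{\cal K}_{abs}$ to be connected for large $t$, but you then resolve this by invoking the $\hf$-H\"older continuity of $S_t$ for $t>r$ from Proposition \ref{pr:closed-ev}, which is a property of the concrete PDE semigroup and is not among the hypotheses of Theorem \ref{th:cl-sem-attract}. The gap closes without leaving the abstract setting: for $t\ge t_*$ one has $S_t{\cal K}_{abs}\subseteq{\cal K}_{abs}$, and property (S.3) says precisely that the graph of $S_t$ restricted to ${\cal K}_{abs}$ is (sequentially) closed in ${\cal K}_{abs}\times{\cal K}_{abs}$; since a map into a compact metric space with closed graph is continuous (any subsequence of $S_tx_n$ has a convergent sub-subsequence, whose limit must be $S_tx$ by closedness), $S_t|_{{\cal K}_{abs}}$ is automatically continuous for $t\ge t_*$. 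Hence $S_t{\cal K}_{abs}$ is compact and connected for such $t$, and either of your two closing arguments (the open-cover contradiction, or writing $\fA$ as a decreasing intersection of the compact connected sets $K_s$, each of which is the closure of a union of connected sets all containing $\fA$) finishes the proof using only the stated hypotheses. With that substitution your proof is complete and self-contained.
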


One of the desired qualitative properties of an attractor is its
finite-dimensionality. We remind the following definition.

\begin{definition}
\label{Def-fract} \cite{Chueshov_Acta-1999_book,Temam_book}).
{\it Let $M\subset {\cal X}$ be a compact set. Then the {\tt fractal
(box-counting) dimension} $dim_f M$ of $M$ is defined by
$$ dim_f M = \lim \sup_{\varepsilon \to 0} \frac{\ln n(M,\varepsilon)}{ \ln
(1/\varepsilon)},
$$
where $n(M,\varepsilon)$ is the minimal number of closed balls of
the radius $\varepsilon$ which cover the set $M$.
 }
\end{definition}

Our proof of the finite-dimensionality of the global attractor used
the following abstract result.

\begin{theorem}\label{th:qs-abstract}
 {\rm \textbf{  (\cite[Theorem 3.1.20]{Chueshov-2014_book})}}.
 {\it Let $Y$ be a Banach space and $M$ be a
bounded closed set in $Y$. Assume that there exists a mapping $V : M
\to Y$
such that \\
(i) $M \subset VM.$\\
 (ii) There exist a Lipschitz mapping $K$ from $M$
into some Banach space $Z$ and a compact seminorm $n_Z(x)$ on $Z$
such that
\begin{equation}\label{7.3.3h-g}
\| Vv^1-Vv^2\|\le \gamma \| v^1-v^2\|+  n_Z(Kv^1-Kv^2)
\end{equation}
 for any $v^1,v^2 \in M$, where $0 <\gamma < 1$ is a
constant. Then $M$ is a compact set in $Y$ of a finite fractal
dimension and
$$dim_f M \le \left[ \ln {2\over 1+\gamma} \right]^{-1} \cdot \ln m_Z \left( {4L_K \over 1 - \gamma} \right) ,
$$ where $L_K > 0$ is the Lipschitz constant for $K$:
$$ ||Kv^1-Kv^2||_Z \le L_K ||v^1-v^2||, v^1, v^2 \in M, 
$$
 and $m_Z(R)$
is the maximal number of elements $z_i$ in the ball $\{z \in Z :
||z_i||_Z \le R \}$ possessing the property $n_Z(z_i-z_j) > 1$ when
$i \neq j$. }
\end{theorem}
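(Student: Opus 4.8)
The plan is to prove the dimension bound by a packing/covering argument that turns the quasi-stability estimate \eqref{7.3.3h-g} into a recursive halving of the diameter of the covering sets, and to read off compactness of $M$ as a by-product. The engine is the quantity $m_Z$. Since $n_Z$ is a \emph{compact} seminorm, every bounded ball of $Z$ is precompact in the pseudometric $(z,z')\mapsto n_Z(z-z')$, so such a ball cannot contain infinitely many points that are pairwise $n_Z$-separated by more than $1$; hence $m_Z(\rho)<\infty$ for every $\rho$. Moreover a \emph{maximal} $1$-separated subset $\{z_i\}$ of $B_Z(0,\rho)$ is automatically a covering: by maximality every $z\in B_Z(0,\rho)$ satisfies $n_Z(z-z_i)\le1$ for some $i$. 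Rescaling via homogeneity of the seminorm, $B_Z(0,\rho)$ is covered by $m_Z(\rho/\delta)$ sets of $n_Z$-diameter at most $2\delta$. This elementary packing-to-covering conversion is the first step.

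Next I would set up a recursion on coverings by sets of small diameter. Let $N(d)$ be the least number of subsets of $Y$ of diameter $\le d$ that cover $M$, starting from $N(d_0)=1$ with $d_0=\operatorname{diam}M$. Given $M=\bigcup_{i=1}^{N}F_i$ with $\operatorname{diam}F_i\le d$, I split each $F_i$ according to the values of $K$: since $K(F_i)$ has $Z$-diameter $\le L_K d$, Step~1 partitions $F_i$ into at most $m_Z(L_Kd/\delta)$ pieces $F_{i,j}$ on each of which $n_Z(Kv^1-Kv^2)\le 2\delta$. Applying \eqref{7.3.3h-g} on a piece gives $\|Vv^1-Vv^2\|\le \gamma d+2\delta$, so each image $V(F_{i,j})$ has diameter $\le \gamma d+2\delta$. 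The crucial use of hypothesis (i) enters now: $M\subset VM=\bigcup_{i,j}V(F_{i,j})$, so the contracted images still cover $M$. Choosing $\theta=\tfrac{1+\gamma}{2}$ and $\delta=\tfrac{\theta-\gamma}{2}\,d=\tfrac{1-\gamma}{4}\,d$ makes the new diameter exactly $\gamma d+2\delta=\theta d$, while the multiplicative cost per step is $m_Z(L_Kd/\delta)=m_Z\!\big(\tfrac{4L_K}{1-\gamma}\big)=:q$. This yields the recursion $N(\theta d)\le q\,N(d)$, hence $N(\theta^k d_0)\le q^{k}$.

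From the recursion both conclusions follow. Finiteness of $q$ (Step~1) together with $N(\theta^k d_0)\le q^k<\infty$ shows that $M$ is totally bounded; being closed in the complete space $Y$, it is therefore compact. For the dimension I pass from diameter-coverings to ball-coverings (a set of diameter $\le d$ lies in a ball of radius $d$), so $n(M,\theta^k d_0)\le q^k$, and a routine monotonicity argument interpolates the scales $\epsilon\in[\theta^{k+1}d_0,\theta^k d_0)$. The definition of fractal dimension then gives
\[
\dim_f M\le \limsup_{k\to\infty}\frac{\ln q^{k}}{\ln(\theta^{-k}d_0^{-1})}
=\frac{\ln q}{\ln(1/\theta)}
=\Big[\ln\tfrac{2}{1+\gamma}\Big]^{-1}\,\ln m_Z\!\Big(\tfrac{4L_K}{1-\gamma}\Big),
\]
which is exactly the asserted estimate.

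The main obstacle is the bookkeeping in Step~2, where the precise constants are won or lost. One must track that (a) the $K$-splitting is driven by the $Z$-\emph{diameter} $L_Kd$ of $K(F_i)$ rather than by an ambient ball radius, so as not to waste a factor; (b) the additive term in \eqref{7.3.3h-g} contributes $2\delta$, the $n_Z$-diameter of a piece, not $\delta$; and (c) the hypothesis $M\subset VM$ is exactly what lets the contracted images $V(F_{i,j})$ recover a cover of $M$ rather than merely of $VM$. Getting the calibration $\theta=\tfrac{1+\gamma}{2}$, $\delta=\tfrac{1-\gamma}{4}d$ to balance the diameter reduction against the covering cost is what produces the sharp constants $\tfrac{2}{1+\gamma}$ and $\tfrac{4L_K}{1-\gamma}$ in the final bound.
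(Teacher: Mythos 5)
Your proof is correct, and since the paper itself gives no proof of this theorem --- it is quoted verbatim from \cite{Chueshov-2014_book} --- the only comparison available is with the source, whose argument yours essentially reproduces: the finiteness of $m_Z$ via precompactness of bounded sets in the $n_Z$-pseudometric, the splitting of a cover by the Lipschitz map $K$, the use of $M\subset VM$ to close the recursion $N(\theta d)\le m_Z\bigl(\tfrac{4L_K}{1-\gamma}\bigr)N(d)$ with $\theta=\tfrac{1+\gamma}{2}$, and the passage to the fractal dimension bound (your use of diameter-coverings in place of ball-coverings is a cosmetic variation). The calibration $\delta=\tfrac{1-\gamma}{4}d$ indeed yields exactly the stated constants, and compactness of $M$ follows as you say from total boundedness plus closedness in the complete space $Y$.
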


We recall (see \cite{EFNT94}) that a compact set $\fA_{\rm
exp}\subset \cCL$ is said to be  {\tt fractal exponential attractor}
for $S_t$  iff $\fA_{\rm exp}$ is a positively invariant set whose
fractal dimension is finite  and for every bounded set $D$ there
exist positive constants $t_D$, $C_D$ and $\gamma_D$ such that
\begin{equation}\label{7.3.4}
\sup_{\va\in D} \mbox{dist}\,_\cCL (S_t\va,\, \fA_{\rm exp})\le
C_D\cdot e^{-\gamma_D(t-t_D)}, \quad t\ge t_D.
\end{equation}
For  details  concerning fractal exponential attractors in the case
of continuous semigroups. we refer to \cite{EFNT94} and also to
the recent survey \cite{MirZel-08}.
We only mention that (i) a global attractor can be non-exponential
and (ii) an exponential attractor is not unique and   contains the
global attractor.

The dimension
theorem discussed above  pertains to
negatively or strictly invariant sets  $M$ ($M \subseteq V(M)$).
To prove the existence of exponential attractors we need an analog of Theorem~\ref{th:qs-abstract}  for  positively invariant sets.
More precisely we  need
 the following assertion which was established in \cite{Chueshov-2014_book}
 and is a version of the result
proved in \cite{Chueshov-Lasiecka-MemAMS-2008_book}   for metric spaces.
\begin{theorem}\label{t7.3.2a-gen}
Let $V\, :\, M\mapsto M$ be  a  mapping defined on  a
 closed bounded set $M$ of a Banach space $Y$.
Assume that
 there exist   a Lipschitz mapping $K$ from  $M$
into some Banach space $Z$ and a compact seminorm $n_Z(x)$  on $Z$ such that the property in \eqref{7.3.3h-g} holds.
Then for any $\theta\in (\ga, 1)$ there exists a positively invariant
compact set $A_\theta\subset M$ of finite fractal dimension  satisfying
\begin{equation}\label{7.3.3g}
\sup\left\{ {\rm dist} (V^ku, A_\theta)\; :\; u\in M\right\}\le r\theta^k,
\quad k=1,2,\ldots,
\end{equation}
for some constant $r>0$.
Moreover,
\begin{equation*}
\dim_fA_\theta\le
\ln m_Z\left(
\frac{2L_K}{\theta-\ga}\right).\left[\ln\frac{1}{\theta}\right]^{-1}
\,,
\end{equation*}
where we use the same notations  as in Theorem~\ref{th:qs-abstract}.
\end{theorem}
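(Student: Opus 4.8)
The plan is to construct $A_\theta$ by an iterated covering (``squeezing'') argument of Eden--Foias--Nicolaescu--Temam type, with the branching number at each step controlled by $m_Z$. The heart of the matter is a one-step covering lemma: \emph{if $E\subseteq M$ has $Y$-diameter at most $d$, then $V(E)$ is a union of at most $N:=m_Z\big(2L_K/(\theta-\ga)\big)$ sets, each of $Y$-diameter at most $\theta d$.} To prove it, fix $v_0\in E$; since $K$ is Lipschitz, $K(E)$ lies in the $Z$-ball of radius $L_Kd$ centered at $Kv_0$. I would select a maximal family in $K(E)$ that is $\tfrac12(\theta-\ga)d$-separated with respect to the seminorm $n_Z$; after rescaling, the definition of $m_Z$ bounds its cardinality by $N$, and maximality guarantees that the associated $n_Z$-balls cover $K(E)$. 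Pulling this partition back through $K$ splits $E$ into at most $N$ pieces $E_1,\dots,E_N$ on each of which $n_Z(Kv-Kv')\le(\theta-\ga)d$. Then \eqref{7.3.3h-g} gives, for $v,v'\in E_i$, the estimate $\|Vv-Vv'\|\le\ga\|v-v'\|+n_Z(Kv-Kv')\le\ga d+(\theta-\ga)d=\theta d$, as claimed.

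Next I would iterate. Since $V(M)\subseteq M$, applying the lemma to $M$ (of diameter $d_0:={\rm diam}_YM$) and then repeatedly to the resulting pieces, which all lie in $M$ by positive invariance, produces for every $m\ge0$ a cover of $V^m(M)$ by at most $N^m$ sets of diameter at most $\theta^m d_0$. Selecting one center in $V^m(M)$ from each set gives finite sets $E_m\subseteq V^m(M)$ with $|E_m|\le N^m$ and $\sup_{u\in M}{\rm dist}(V^mu,E_m)\le\theta^m d_0$; this is already \eqref{7.3.3g} with $r=d_0$, once the $E_m$ are placed inside $A_\theta$. Two auxiliary facts will be used repeatedly: (i) $V$ is Lipschitz, hence continuous, because a compact seminorm is dominated by the norm ($n_Z(z)\le c\|z\|_Z$) and $K$ is Lipschitz, so \eqref{7.3.3h-g} yields $\|Vv-Vv'\|\le(\ga+cL_K)\|v-v'\|$; and (ii) positive invariance forces the nesting $V^m(M)\subseteq V^j(M)$ whenever $m\ge j$.

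I would then define $\mathcal E:=\bigcup_{k\ge0}E_k$ and set $A_\theta:=\overline{\bigcup_{n\ge0}V^n\mathcal E}$. Positive invariance $V(A_\theta)\subseteq A_\theta$ is immediate from the continuity of $V$ together with the $\bigcup_nV^n$ structure, and the exponential bound \eqref{7.3.3g} holds because $E_k\subseteq A_\theta$. The decisive structural point for the dimension is that $A_\theta$ is much thinner than it appears: writing a typical piece as $V^nE_k\subseteq V^{n+k}(M)$ and grouping by $m=n+k$, the set $S_m:=\bigcup_{n+k=m}V^nE_k$ is a \emph{finite} subset of $V^m(M)$ with $|S_m|\le\sum_{k\le m}N^k\le CN^m$, and $A_\theta=\overline{\bigcup_{m\ge0}S_m}$.

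The main obstacle is the finite-dimensionality bound, where the naive estimate fails: covering each Lipschitz image $V^n\mathcal E$ separately inflates the exponent by the iterated Lipschitz constant $L^n$ and destroys the sharp constant. The remedy is to exploit the self-similar tower instead. To cover $A_\theta$ at scale $\eps$, choose $K$ with $\theta^Kd_0\le\eps$. For $m\ge K$ the nesting (ii) gives $S_m\subseteq V^m(M)\subseteq V^K(M)$, which lies within $\theta^Kd_0\le\eps$ of $E_K$; hence the whole tail $\bigcup_{m\ge K}S_m$ is covered by $|E_K|\le N^K$ balls. For $m<K$ each finite set $S_m$ needs at most $CN^m$ balls, and these sum to $O(N^K)$. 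Thus $A_\theta$ is covered by $O(N^K)$ balls of radius comparable to $\eps$; finiteness of these covers gives total boundedness, so the closed set $A_\theta$ is compact. Finally, with $K\sim\log_{1/\theta}(d_0/\eps)$ one gets $n(A_\theta,\eps)\lesssim(d_0/\eps)^{\ln N/\ln(1/\theta)}$, and taking the $\limsup$ in the definition of fractal dimension yields exactly $\dim_fA_\theta\le\ln m_Z\big(2L_K/(\theta-\ga)\big)\big/\ln(1/\theta)$, which completes the plan.
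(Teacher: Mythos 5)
Your proposal is correct: the one-step covering lemma (a maximal $\frac12(\theta-\gamma)d$-separated family in $K(E)$ with respect to $n_Z$, rescaled to give at most $N=m_Z\big(2L_K/(\theta-\gamma)\big)$ pieces on each of which \eqref{7.3.3h-g} yields diameter contraction by $\theta$), the tower $A_\theta=\overline{\bigcup_n V^n\bigcup_k E_k}$, the domination $n_Z(z)\le c\|z\|_Z$ making $V$ Lipschitz so that invariance survives the closure, and the nesting $V^m(M)\subseteq V^{\kappa}(M)$ for $m\ge\kappa$ that collapses the tail onto $E_\kappa$ all fit together to give compactness, positive invariance, the attraction estimate \eqref{7.3.3g} with $r={\rm diam}_Y M$, and exactly the stated dimension bound. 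This is essentially the same squeezing-type quasi-stability construction as in the source the paper relies on for this theorem (the paper states it without proof, citing \cite{Chueshov-2014_book}), so there is nothing to flag.
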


\end{document}